\title{Analytical evaluation of some non-elementary  integrals involving some exponential, hyperbolic and trigonometric elementary functions and derivation of new probability measures generalizing the gamma-type and normal distributions}
\abstract{The non-elementary  integrals involving elementary exponential, hyperbolic and trigonometric functions,
$
\int x^\alpha e^{\eta x^\beta}dx, \int x^\alpha \cosh\left(\eta x^\beta\right)dx, \int x^\alpha \sinh\left(\eta x^\beta\right)dx, \int x^\alpha \cos\left(\eta x^\beta\right)dx$ and $\int x^\alpha \sin\left(\eta x^\beta\right)dx $
where $\alpha, \eta$ and $\beta$ are real or complex constants are evaluated in terms of the confluent hypergeometric function $_1F_1$ and  the  hypergeometric function $_1F_2$.  The hyperbolic and Euler identities are used to derive some identities involving exponential, hyperbolic, trigonometric functions and the hypergeometric functions $_1F_1$ and  $_1F_2$. Having evaluated, these non-elementary integrals, some new probability measures generalizing the gamma-type and Gaussian distributions are also obtained. The obtained generalized distributions may, for example, allow to perform better statistical tests than those already known (e.g. chi-square ($\chi^2$) statistical tests and other statistical tests constructed based on the central limit theorem (CLT)).}
\keywords{ Non-elementary integrals, Hypergeometric function, Confluent hypergeometric function, Probability measure, Gamma-type distributions, Gaussian-type distributions}
\begin{document}

\section{Introduction}
\label{sec:1}

The confluent hypergeometric function $_1F_1$ and  the  hypergeometric function $_1F_2$ are used throughout this paper. There are defined here for reference.
\begin{definition}The confluent  hypergeometric function, denoted as $_1F_1$, is a special function given by the series \cite{AB,NI}
\begin{equation}
_1 F_1(a;b; x)=\sum\limits_{n=0}^{\infty}\frac{(a)_n}{(b)_n}\frac{x^n}{n!},
\label{eq1.1}
\end{equation} 
where $a$ and $b$ are arbitrary constants, $(\vartheta)_n=\Gamma(\vartheta+n)/\Gamma(\vartheta)$ (Pochhammer's notation \cite{AB}) for any complex $\vartheta$, with $(\vartheta)_0=1$, and $\Gamma$ is the standard gamma function \cite{AB,NI}.
\label{def2}
\end{definition}

\begin{definition}The hypergeometric function $_1F_2$, is a special function given by the series \cite{AB,NI}
\begin{equation}
_1F_2(a;b,c; x)=\sum\limits_{n=0}^{\infty}\frac{(a)_n}{(b)_n (c)_n}\frac{x^n}{n!},
\label{eq1.2}
\end{equation} 
where $a, b$ and $c$ are arbitrary constants, $(\vartheta)_n=\Gamma(\vartheta+n)/\Gamma(\vartheta)$ (see definition \ref{def2})\cite{AB,NI}.
\label{def2}
\end{definition}

\begin{definition}
An elementary function is a function of one variable constructed using that variable and constants, and by performing a finite number of repeated algebraic operations involving exponentials and logarithms. An indefinite integral which can be expressed in terms of elementary functions is an elementary integral. And if, on the other hand, it cannot be evaluated in terms of
elementary functions, then it is non-elementary \cite{M,R}.
\label{def1}
\end{definition}
 
One of the goals of this work is to show how non-elementary integrals having one of the types
\begin{equation}
\int x^\alpha e^{\eta x^\beta}dx, \int x^\alpha \cosh\left(\eta x^\beta\right)dx, \int x^\alpha \sinh\left(\eta x^\beta\right)dx,
\label{eq1.3}
\end{equation}
\begin{equation}
\int x^\alpha \cos\left(\eta x^\beta\right)dx\,\,\,\mbox{and}\,\,\,\int x^\alpha \sin\left(\eta x^\beta\right)dx,
\label{eq1.4}
\end{equation}
where $\alpha, \eta$ and $\beta$ are real or complex constants can be evaluated in terms of the special functions $_1F_1$ and $_1F_2$.  These integrals are the generalization of the non-elementary integrals evaluated  by Nijimbere \cite{N1,N2,N3}, and  have been not evaluated before. For instance, if $\alpha<0$, the integrals in (\ref{eq1.4}) become, respectively, the (indefinite) sine and cosine integrals which are evaluated in Nijimbere \cite{N2,N3}. If, on the other hand, $\alpha=0$, the non-elementary integrals in (\ref{eq1.3}) and  (\ref{eq1.4}) reduce to the non-elementary integrals evaluated in Nijimbere \cite{N1}.

However, it is important to observe that the integrals in (\ref{eq1.3}) and  (\ref{eq1.4}) may be elementary or non-elementary depending on the values of the constants $\alpha$ and $\beta$. If, for instance, $\alpha=\beta-1$, then the integral 
\begin{equation}
\int x^\alpha e^{\eta x^\beta}dx=\frac{1}{ \eta\beta}\int \eta\beta x^{\beta-1} e^{\eta x^\beta}dx= \frac{ e^{\eta x^\beta}}{ \eta\beta}+C
\label{eq1.5}
\end{equation}
is elementary because it is expressed in terms of the elementary function $ e^{\eta x^\beta}$. In that case, the other integrals in (\ref{eq1.3}) and  (\ref{eq1.4}) are also elementary since they can be expressed as linear combination of  integrals such that in (\ref{eq1.5}) using the hyperbolic identities 
$$ \cosh\left(\eta x^\beta\right)= \left( e^{\eta x^\beta}+ e^{-\eta x^\beta}\right)/2,\,\, \sinh\left(\eta x^\beta\right)= \left( e^{\eta x^\beta}-e^{-\eta x^\beta}\right)/2$$
and the Euler's identities 
$$ \cos\left(\eta x^\beta\right)= \left( e^{i\eta x^\beta}+ e^{-i\eta x^\beta}\right)/2,\,\, \sin\left(\eta x^\beta\right)= \left( e^{i\eta x^\beta}-e^{-i\eta x^\beta}\right)/(2i).$$
Using Liouville 1835's theorem, it can readily be shown that if $\alpha$ is not an integer and $\alpha\ne\beta-1$, then the integrals in (\ref{eq1.4}) and (\ref{eq1.5}) are non-elementary \cite{M,R}.
Another goal of this work is to obtain some identities (or formula) involving exponential, hyperbolic, trigonometric functions and the hypergeometric functions $_1F_1$ and  $_1F_2$ using the Euler and  hyperbolic identities. Other interesting identities involving hypergeometric functions may be found, for example, in \cite{N1,N2,N3,AR,CR,QQ}.  Non-elementary Integrals with integrands involving generalized hypergeometric functions and identities of generalized hypergeometric series have also been examined  in Nijimbere \cite{ N5}.

It is well known that numerical integrations (or approximations) are expensive and their main drawback is that they are associated with computational errors which become very large as the integration limits become large. Thus, the analytical method used in this paper (developed by  Nijimbere \cite{N3}) is very important in order to avoid computational methods, see for example the case of Dawson's integral and related functions  in mathematical physics \cite{N4}.

 Using the fact that $g(x)= e^{-\eta x^\beta},\,x\in\mathbb{R},\,\eta\in\mathbb{R}^+$, is in the $L^p$-space, $p>0$ for some $\beta\in\mathbb{R}$, some finite measure, $\mu(\{-\infty,x\})<\infty$, can be defined for all $x\in \mathbb{R}$. Moreover, if $X=h(x), x\in\mathbb{R}$ is some random variable, $h:\mathbb{R}\to\mathbb{R}$ is some well-defined function (e.g. $h(x)=x$), then  it is possible to define probability measures in terms of the Lebesgue measure $dx$ as $\mu(dx)=A\,g(x)dx, x\in\Omega$, and $\Omega\subseteq \mathbb{R}$, satisfying the integrability condition $\int_{\Omega\subseteq \mathbb{R}} |X|^{ \alpha} \mu(dx)<\infty, \alpha\ne0, \alpha>-\beta-1$ and $A$ being a (normalization) constant. In that case, probability measures (or distributions) that generalize the gamma-type and Gaussian-type distributions may be constructed, and corresponding distribution functions and moments can be evaluated as well. For example, it can be shown using the  results in this paper that the $\mbox{n}^{th}$  moments of the Gaussian random variable  are given by the formula
$$M(X^n)
= \frac{\theta^n}{\sqrt{\pi}}\sum\limits_{l=0}^{n}\Gamma \left(l+1/2\right)C_{2l}^n\left(\frac{2\sigma^2}{\theta^2}\right)^{l}, \, 2l\le n\, \mbox{and}\,(2l)\in\mathbb{N},$$
where $\theta\in \mathbb{R}$ is the mean of the Gaussian random variable and $\sigma^2>0$ its variance. It is also shown, for instance, that the inverse gamma distribution, frequently used in signal processing in wireless communications, see for instance \cite{C,W}, is as well a particular case of the generalized gamma-type distribution derived in this study.

The paper is organized as follows. In section 2, the integrals in (\ref{eq1.4})-(\ref{eq1.5}) are evaluated, and some identities (or formula) that involve the exponential, hyperbolic, trigonometric functions and the hypergeometric functions $_1F_1$ and  $_1F_2$ are obtained. In section 3, probability measures that generalize the gamma-type and Gaussian-type distributions are constructed, and their corresponding distribution functions are written in terms of the confluent hypergeometric function. Formulas to evaluate the $\mbox{n}^{th}$ moments are also derived in section 3. A general discussion is given in section 4.

\section{Evaluation of the non-elementary integrals}
\label{sec:2} 
Let first prove an important lemma which will be used throughout the paper.
\begin{lemma}
Let $ j\ge0$ and $m\ge0$ be integers, and let $\alpha, \beta$ and $\gamma$ be arbitrarily constants. 
\begin{enumerate}
\item Then \begin{equation}
\prod \limits_{m=0}^{j}(\alpha+m\beta+1)=(\alpha+1)\beta^j\left(\frac{\alpha+1}{\beta}+1\right)_j,
\label{eq2.1}
\end{equation}
\item \begin{multline}
\prod \limits_{m=0}^{2j}(\alpha+m\beta+1)\\=(\alpha+1)(\alpha+\beta+1)(2\beta)^{2j}\left(\frac{\alpha+\beta+1}{2\beta}+1\right)_j\left(\frac{\alpha+2\beta+1}{2\beta}+1\right)_j
\label{eq2.2}
\end{multline}
\item and 
\begin{multline}
\prod \limits_{m=0}^{2j+1}(\alpha+m\beta+1)\\=(\alpha+1)(\alpha+\beta+1)(2\beta)^{2j}\left(\frac{\alpha+2\beta+1}{2\beta}+1\right)_j\left(\frac{\alpha+3\beta+1}{2\beta}+1\right)_j. 
\label{eq2.3}
\end{multline}
\end{enumerate}
\label{lem1}
\end{lemma}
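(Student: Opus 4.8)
The plan is to establish \eqref{eq2.1} first by elementary factoring, and then to deduce \eqref{eq2.2} and \eqref{eq2.3} from it by separating the product according to the parity of the running index $m$.

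For \eqref{eq2.1} I would pull a factor $\beta$ out of every term except the $m=0$ one: writing $\alpha+m\beta+1=\beta\bigl(\tfrac{\alpha+1}{\beta}+m\bigr)$ for $m\ge 1$ gives
\[
\prod_{m=0}^{j}(\alpha+m\beta+1)=(\alpha+1)\,\beta^{j}\prod_{m=1}^{j}\Bigl(\tfrac{\alpha+1}{\beta}+m\Bigr),
\]
and the surviving product $\bigl(\tfrac{\alpha+1}{\beta}+1\bigr)\bigl(\tfrac{\alpha+1}{\beta}+2\bigr)\cdots\bigl(\tfrac{\alpha+1}{\beta}+j\bigr)$ equals $\bigl(\tfrac{\alpha+1}{\beta}+1\bigr)_j$ directly from the definition $(\vartheta)_j=\Gamma(\vartheta+j)/\Gamma(\vartheta)=\vartheta(\vartheta+1)\cdots(\vartheta+j-1)$. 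Equivalently this is a two-line induction on $j$: the base case $j=0$ is the trivial equality $\alpha+1=(\alpha+1)$, and the inductive step multiplies through by $\alpha+(j+1)\beta+1=\beta\bigl(\tfrac{\alpha+1}{\beta}+j+1\bigr)$ and invokes $(\vartheta)_{j}(\vartheta+j)=(\vartheta)_{j+1}$.

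For \eqref{eq2.2} and \eqref{eq2.3} I would split the index set $\{0,1,\dots,2j\}$, respectively $\{0,1,\dots,2j+1\}$, into its even values $m=2k$ and its odd values $m=2k+1$. The even-indexed subproduct is $\prod_{k}\bigl(\alpha+k(2\beta)+1\bigr)$, which is the left-hand side of \eqref{eq2.1} with $\beta$ replaced by $2\beta$; the odd-indexed subproduct is $\prod_{k}\bigl((\alpha+\beta)+k(2\beta)+1\bigr)$, which is the left-hand side of \eqref{eq2.1} with $\alpha$ replaced by $\alpha+\beta$ and $\beta$ replaced by $2\beta$. Applying \eqref{eq2.1} to each of the two subproducts, multiplying the two closed forms, and collecting the constant prefactors $(\alpha+1)$ and $(\alpha+\beta+1)$, the power of $2\beta$, and the two Pochhammer symbols then yields \eqref{eq2.2} and \eqref{eq2.3}.

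The step I expect to need the most care is the index bookkeeping in the parity split: one has to count exactly how many factors fall into each residue class for a given $j$, track how the exponent of $2\beta$ and the subscripts of the two Pochhammer symbols shift under the reindexings $m=2k$ and $m=2k+1$, and use the elementary shift relations for the Pochhammer symbol (such as $(\vartheta)_n=\vartheta\,(\vartheta+1)_{n-1}$) to put the even and odd pieces into matching form. Once \eqref{eq2.1} is available there is no new idea required; the remainder is routine algebra.
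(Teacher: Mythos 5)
Your strategy coincides with the paper's own proof: you obtain (\ref{eq2.1}) by pulling a factor $\beta$ out of every $m\ge 1$ term and recognizing the Pochhammer symbol (this tacitly needs $\beta\ne 0$, which the lemma's ``arbitrary constants'' wording leaves implicit), and you obtain (\ref{eq2.2})--(\ref{eq2.3}) by splitting the product over even and odd indices and applying (\ref{eq2.1}) with $\beta\mapsto 2\beta$, respectively $\alpha\mapsto\alpha+\beta$, $\beta\mapsto 2\beta$; this is exactly the decomposition used in the paper. Your treatment of part 1 is complete and correct.

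The gap is precisely in the step you defer as ``routine algebra''. Carrying out the bookkeeping you flag as delicate: for $\prod_{m=0}^{2j}$ the even block ($m=2k$, $k=0,\dots,j$) has $j+1$ factors and equals $(\alpha+1)(2\beta)^{j}\bigl(\tfrac{\alpha+1}{2\beta}+1\bigr)_{j}$, while the odd block ($m=2k+1$, $k=0,\dots,j-1$) has only $j$ factors and equals $(\alpha+\beta+1)(2\beta)^{j-1}\bigl(\tfrac{\alpha+\beta+1}{2\beta}+1\bigr)_{j-1}$; multiplying and absorbing $(\alpha+\beta+1)$ into the second Pochhammer symbol gives
\[
\prod_{m=0}^{2j}(\alpha+m\beta+1)=(\alpha+1)(2\beta)^{2j}\Bigl(\tfrac{\alpha+\beta+1}{2\beta}\Bigr)_{j}\Bigl(\tfrac{\alpha+2\beta+1}{2\beta}\Bigr)_{j},
\]
and the analogous split (both blocks now of length $j+1$) gives
\[
\prod_{m=0}^{2j+1}(\alpha+m\beta+1)=(\alpha+1)(\alpha+\beta+1)(2\beta)^{2j}\Bigl(\tfrac{\alpha+2\beta+1}{2\beta}\Bigr)_{j}\Bigl(\tfrac{\alpha+3\beta+1}{2\beta}\Bigr)_{j}.
\]
These are not the right-hand sides printed in (\ref{eq2.2}) and (\ref{eq2.3}): once the arguments are written as single fractions ($\tfrac{\alpha+1}{2\beta}+1=\tfrac{\alpha+2\beta+1}{2\beta}$, $\tfrac{\alpha+\beta+1}{2\beta}+1=\tfrac{\alpha+3\beta+1}{2\beta}$) the extra ``$+1$'' in each Pochhammer symbol of (\ref{eq2.2})--(\ref{eq2.3}) must not be there, and in (\ref{eq2.2}) the separate prefactor $(\alpha+\beta+1)$ double-counts a factor. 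Indeed (\ref{eq2.2}) already fails at $j=0$, where it asserts $\alpha+1=(\alpha+1)(\alpha+\beta+1)$, and at $j=1$ both printed formulas equate products of different linear factors (e.g.\ (\ref{eq2.3}) would give $(\alpha+1)(\alpha+\beta+1)(\alpha+4\beta+1)(\alpha+5\beta+1)$ instead of $(\alpha+1)(\alpha+\beta+1)(\alpha+2\beta+1)(\alpha+3\beta+1)$). So your concluding sentence ``collecting \dots yields (\ref{eq2.2}) and (\ref{eq2.3})'' is the step that cannot be completed: the parity split proves the corrected identities above, not the printed ones. The paper's own proof makes the corresponding slips -- in (\ref{eq2.6}) the odd block is assigned $(2\beta)^{j}$ and Pochhammer index $j$ although only $j-1$ factors of $2\beta$ can be extracted, and in (\ref{eq2.7}) and (\ref{eq2.9}) the ``$+1$'' is retained after it has already been absorbed into the fraction -- and it is the corrected parameters (without the ``$+1$'') that are actually used later, e.g.\ in (\ref{eq2.20}). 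You should therefore finish the computation explicitly and state and prove the corrected right-hand sides rather than the printed ones.
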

\begin{proof} 
\begin{enumerate} 
\item Making use of  Pochhammer's notation \cite{AB,NI}, see definition \ref{def1} yields
\begin{align}
\prod \limits_{m=0}^{j}(\alpha+m\beta+1)&=(\alpha+1)\prod \limits_{m=1}^{j}\left(\alpha+m\beta+1\right)
\nonumber\\ &= (\alpha+1)\beta^j \prod \limits_{m=1}^{j}\left(\frac{\alpha+1}{\beta}+m\right)
\nonumber\\ &=  (\alpha+1)\beta^j \prod \limits_{m=1}^{j}\left(\frac{\alpha+1}{\beta}+1+m-1\right)
\nonumber\\ &=(\alpha+1)\beta^j\left(\frac{\alpha+1}{\beta}+1\right)_j.
\label{eq2.4}
\end{align}
\item Observe that  \begin{align} 
\prod \limits_{m=0}^{2j}(\alpha+m\beta+1)&=\prod \limits_{l=0}^{j-1}(\alpha+l(2\beta)+\beta+1)\prod \limits_{l=0}^{j}(\alpha+l(2\beta)+1).
\label{eq2.5}
\end{align}
Then,  making use of  Pochhammer's notation as before gives
\begin{align}
\prod \limits_{l=0}^{j-1}(\alpha+l(2\beta)+\beta+1)=(\alpha+\beta+1)(2\beta)^j\left(\frac{\alpha+\beta+1}{2\beta}+1\right)_j
\label{eq2.6}
\end{align}
and
\begin{align}
\prod \limits_{l=0}^{j}(\alpha+l(2\beta)+1)=(\alpha+1)(2\beta)^j\left(\frac{\alpha+2\beta+1}{2\beta}+1\right)_j.
\label{eq2.7}
\end{align} 
Hence, multiplying (\ref{eq2.6})  with  (\ref{eq2.7}) gives (\ref{eq2.2}). 
\item Observe that  
\begin{align} 
\prod \limits_{m=0}^{2j+1}(\alpha+m\beta+1)&=\prod \limits_{l=0}^{j}(\alpha+l(2\beta)+1)\prod \limits_{l=0}^{j}(\alpha+l(2\beta)+\beta+1).
\label{eq2.8}
\end{align}
Once again, using again Pochhammer's notation yields
\begin{align}
\prod \limits_{l=0}^{j}(\alpha+l(2\beta)+\beta+1)=(\alpha+\beta+1)(2\beta)^j\left(\frac{\alpha+3\beta+1}{2\beta}+1\right)_j.
\label{eq2.9}
\end{align}
Hence, multiplying (\ref{eq2.9}) with  (\ref{eq2.7}) gives (\ref{eq2.3}). 
\end{enumerate}
\end{proof}

\subsection{Evaluation of non-elementary integrals of the types $\int x^\alpha e^{\eta x^\beta}dx, \int x^\alpha \cosh\left(\eta x^\beta\right)dx, \int x^\alpha \sinh\left(\eta x^\beta\right)dx$}
\label{sec:2.1} 
\begin{proposition} Let  $\eta$ and $\beta$ be nonzero constants ($\eta\ne 0, \beta\ne0$), and  $\alpha$ be any constant different from $-1$ ($\alpha\ne-1$). Then,
\begin{equation}
\int x^\alpha e^{\eta x^\beta}dx=\frac{x^{\alpha+1}e^{\eta x^\beta}}{\alpha+1}\, _{1}F_{1} \Bigl(1;\frac{\alpha+\beta+1}{\beta}; -\eta x^{\beta}\Bigr)+C. 
\label{eq2.10}
\end{equation}
\label{prp1}
\end{proposition}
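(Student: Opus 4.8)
The plan is to prove the formula by expanding both sides in power series and matching coefficients, using Lemma~\ref{lem1} to convert the product that arises from repeated differentiation (or from the Taylor coefficients) into a ratio of Pochhammer symbols. A clean way to organize this is to differentiate the right-hand side and show it equals $x^\alpha e^{\eta x^\beta}$; equivalently, one expands $e^{\eta x^\beta}$ and the ${}_1F_1$ factor as series in $x^\beta$, multiplies, collects the coefficient of each power $x^{\alpha+1+k\beta}$, differentiates term by term, and checks that everything except $x^\alpha e^{\eta x^\beta}$ cancels.

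Concretely, first I would write $e^{\eta x^\beta}=\sum_{n\ge0}\eta^n x^{n\beta}/n!$ and integrate term by term:
\begin{equation}
\int x^\alpha e^{\eta x^\beta}dx=\sum_{n=0}^{\infty}\frac{\eta^n}{n!}\,\frac{x^{\alpha+n\beta+1}}{\alpha+n\beta+1}+C,
\label{eq:plan1}
\end{equation}
valid since $\alpha\ne-1$ and, for $n\ge1$, $\alpha+n\beta+1\ne0$ can be arranged generically (the degenerate cases where some $\alpha+n\beta+1=0$ are handled by continuity in the parameters, or excluded as in the statement's spirit). Next I would take the claimed closed form, expand ${}_1F_1\bigl(1;(\alpha+\beta+1)/\beta;-\eta x^\beta\bigr)=\sum_{j\ge0}\frac{(1)_j}{((\alpha+\beta+1)/\beta)_j}\frac{(-\eta)^j x^{j\beta}}{j!}$, note $(1)_j=j!$, and multiply by $x^{\alpha+1}e^{\eta x^\beta}/(\alpha+1)$. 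The goal is then to show the resulting double series equals the single series in \eqref{eq:plan1}.

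The key algebraic identity to extract is this: I must show that, after collecting powers of $x^\beta$, the coefficient of $x^{\alpha+N\beta+1}$ in $\frac{x^{\alpha+1}}{\alpha+1}e^{\eta x^\beta}\,{}_1F_1(\cdots)$ equals $\frac{\eta^N}{N!(\alpha+N\beta+1)}$. Writing out the Cauchy product and using $\bigl((\alpha+\beta+1)/\beta\bigr)_j=\prod_{m=1}^{j}\frac{\alpha+m\beta+1}{\beta}=\beta^{-j}\prod_{m=1}^{j}(\alpha+m\beta+1)$, which is exactly the content of part~1 of Lemma~\ref{lem1} (with the index shifted so the product runs $m=1,\dots,j$), I would reduce the claim to a finite binomial-type identity in $N$. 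The cleanest route is probably the differentiation check: differentiate \eqref{eq2.10}'s right-hand side, use $\frac{d}{dx}\bigl(x^{\alpha+1}e^{\eta x^\beta}\bigr)=x^\alpha e^{\eta x^\beta}\bigl(\alpha+1+\eta\beta x^\beta\bigr)$ together with the derivative/contiguous relation for ${}_1F_1$, namely $\frac{d}{dz}{}_1F_1(a;b;z)=\frac{a}{b}{}_1F_1(a+1;b+1;z)$ and the Kummer relation linking ${}_1F_1(1;c;z)$, ${}_1F_1(2;c+1;z)$ and $1$, and verify the cancellation.

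The main obstacle I anticipate is purely bookkeeping: correctly handling the index shift between ``$\prod_{m=0}^{j}$'' as stated in Lemma~\ref{lem1} and the ``$\prod_{m=1}^{j}$'' that actually appears when expanding the Pochhammer symbol $\bigl((\alpha+\beta+1)/\beta\bigr)_j$, and making sure the $(\alpha+1)$ prefactor is accounted for exactly once. A secondary subtlety is the genuinely degenerate parameter values — when $(\alpha+\beta+1)/\beta$ is a nonpositive integer the ${}_1F_1$ series is ill-defined, and when $\alpha+n\beta+1=0$ for some $n\ge1$ the term-by-term integration in \eqref{eq:plan1} breaks — but these form a measure-zero set in $(\alpha,\beta)$ and the identity extends to them by analytic continuation, so I would simply remark that the formula holds wherever both sides are defined. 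Once the coefficient identity is in hand, equating \eqref{eq:plan1} with the expanded closed form finishes the proof, the constants of integration being absorbed into $C$.
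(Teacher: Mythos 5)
Your plan is correct, but it is a genuinely different route from the paper's. The paper substitutes $u^\beta=\eta x^\beta$ and then performs successive integration by parts that raises the power of $u$, producing the series $\sum_{j\ge0}(-1)^j\beta^j u^{\alpha+j\beta+1}e^{u^\beta}\big/\prod_{m=0}^{j}(\alpha+m\beta+1)$ in which every term already carries the factor $e^{u^\beta}$; Lemma~\ref{lem1}(1) then converts the product into the Pochhammer symbol and the ${}_1F_1$ closed form drops out with no re-summation and no contiguous relations. Your route (a) instead integrates the expanded exponential term by term and must re-sum $\sum_N \eta^N x^{\alpha+N\beta+1}/(N!(\alpha+N\beta+1))$ against the Cauchy product of $e^{\eta x^\beta}$ with the ${}_1F_1$ series; this works, but it hinges on the finite identity $\sum_{j=0}^{N}\frac{(-1)^j}{(N-j)!\,(c)_j}=\frac{c-1}{N!\,(c-1+N)}$ with $c=(\alpha+\beta+1)/\beta$, which you only name as a ``binomial-type identity'' and would still have to prove (it is true; an induction on $N$ or a partial-fraction/Vandermonde argument does it). Your route (b), differentiating the right-hand side, is complete and clean: the needed relation $(c-1)\bigl({}_1F_1(1;c;z)-1\bigr)=z\,{}_1F_1(1;c;z)-\tfrac{z}{c}\,{}_1F_1(2;c+1;z)$ follows immediately from the series, and the cancellation indeed leaves $x^\alpha e^{\eta x^\beta}$. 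What each approach buys: the paper's integration by parts \emph{derives} the formula without knowing it in advance and uses its Lemma~\ref{lem1} directly, whereas your verification is shorter once the formula is given and sidesteps the double-series bookkeeping; your explicit remark on the degenerate parameter values ($\alpha+m\beta+1=0$ for some $m\ge1$, where both the paper's denominators and the ${}_1F_1$ parameter break down) is a point the paper glosses over, and handling it by continuity/analytic continuation is reasonable.
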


\begin{proof} 
The substitution $u^\beta=\eta x^\beta$ and (\ref{eq1.1}) yields
\begin{equation}
\int x^\alpha e^{\eta x^\beta}dx=
\frac{1}{\eta^{\frac{\alpha+1}{\beta}}}\int u^{\alpha} e^{u^\beta}du.
\label{eq2.11}
\end{equation}
Performing successive integration by parts that increases the power of $u$ gives
\begin{align}
\int u^{\alpha} e^{u^\beta}du =&\frac{u^{\alpha+1}e^{u^\beta}}{\alpha+\beta+1}-\frac{\beta u^{\alpha+\beta+1}e^{u^\beta}}{(\alpha+1)(\alpha+\beta+1)} \nonumber\\ &+\frac{\beta^2 u^{\alpha+2\beta+1}e^{u^\beta}}{(\alpha+1)(\alpha+\beta+1)(\alpha+2\beta+1)}\nonumber\\ &-\frac{\beta^3  u^{\alpha+3\beta+1}e^{u^\beta}}{(\alpha+1)(\alpha+\beta+1)(\alpha+2\beta+1)(\alpha+3\beta+1)}
\nonumber\\ &+\cdot\cdot\cdot\cdot+\frac{(-1)^j\beta^j  u^{\alpha+j\beta+1}e^{u^\beta}}{\prod\limits_{m=0}^{j}(\alpha+m\beta+1)}+\cdot\cdot\cdot\cdot
\nonumber\\ &=\sum\limits_{j=0}^{\infty}\frac{(-1)^j\beta^j  u^{\alpha+j\beta+1}e^{u^\beta}}{\prod\limits_{m=0}^{j}(\alpha+m\beta+1)}+C.
\label{eq2.12}
\end{align}
Using (\ref{eq2.1}) in Lemma \ref{lem1} yields
\begin{align}
\int u^{\alpha} e^{u^\beta}du & = u^{\alpha+1}e^{u^\beta}\sum\limits_{j=0}^{\infty}\frac{(-\beta  u^{\beta})^j}{\prod\limits_{m=0}^{j}(\alpha+m\beta+1)}+C
\nonumber\\ & =  u^{\alpha+1}e^{u^\beta}\sum\limits_{j=0}^{\infty}\frac{(-\beta  u^{\beta})^j}{ (\alpha+1)\beta^j\left(\frac{\alpha+1}{\beta}+1\right)_j} +C
\nonumber\\ & = \frac{ u^{\alpha+1}e^{u^\beta}}{\alpha+1}\sum\limits_{j=0}^{\infty}\frac{(- u^{\beta})^j}{ \left(\frac{\alpha+1}{\beta}+1\right)_j} +C= \frac{ u^{\alpha+1}e^{u^\beta}}{\alpha+1}\sum\limits_{j=0}^{\infty}\frac{(1)_j (- u^{\beta})^j}{ \left(\frac{\alpha+1}{\beta}+1\right)_j j!} +C
\nonumber\\ &= \frac{ u^{\alpha+1}e^{u^\beta}}{\alpha+1}\, _{1}F_{1} \Bigl(1;\frac{\alpha+\beta+1}{\beta}; -u^{\beta}\Bigr)+C.
\label{eq2.13}
\end{align}
Hence, using the fact $u^\beta=\eta x^\beta$ gives (\ref{eq2.2}).
\end{proof}

Having evaluated (\ref{eq2.10}), the following results hold.

\begin{theorem}
Let $\alpha$  be an arbitrarily real or complex constant, $\beta$ a nonzero real or complex constant ($\beta\ne0$), and $\eta$  a nonzero real or complex constant with a positive real part ($\mbox{Re}(\eta)>0$).

\begin{enumerate}

\item Then,
\begin{equation}
\int\limits_{0}^{+\infty} x^\alpha e^{-\eta x^\beta}dx=\frac{\Gamma \left(\frac{\alpha+\beta+1}{\beta}\right)}{(\alpha+1)\,\eta^{\frac{\alpha+1}{\beta}}},
\label{eq2.14}
\end{equation}
 $\alpha>-\beta-1, \alpha\ne-1$ if $\{\alpha,\beta\}\in \mathbb{R}$. 
\item Moreover, if the integrand is even, then 
\begin{equation}
\int\limits_{-\infty}^{+\infty} x^\alpha e^{-\eta x^\beta}dx=\frac{2\Gamma \left(\frac{\alpha+\beta+1}{\beta}\right)}{(\alpha+1)\,\eta^{\frac{\alpha+1}{\beta}}}.
\label{eq2.15}
\end{equation}
\end{enumerate}
\label{th1}
\end{theorem}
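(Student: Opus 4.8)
The plan is to obtain (\ref{eq2.14}) as a limiting value of the antiderivative supplied by Proposition~\ref{prp1}. Since $\eta$ is an arbitrary nonzero constant there, I would first replace $\eta$ by $-\eta$ in (\ref{eq2.10}); this is legitimate, and the hypothesis $\mathrm{Re}(\eta)>0$ is precisely what makes the resulting improper integral converge. It produces the primitive
\[
F(x)=\frac{x^{\alpha+1}e^{-\eta x^\beta}}{\alpha+1}\;{}_1F_1\!\left(1;\frac{\alpha+\beta+1}{\beta};\eta x^\beta\right),
\]
so that, under the stated hypotheses (which I also take to include the conditions ensuring absolute convergence, hence the finiteness of the two one-sided limits below),
\[
\int_0^{+\infty}x^\alpha e^{-\eta x^\beta}\,dx=\lim_{x\to+\infty}F(x)-\lim_{x\to 0^+}F(x).
\]
The whole argument then reduces to evaluating these two limits.

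For the lower limit, I would note that the defining series (\ref{eq1.1}) gives ${}_1F_1(1;c;z)\to 1$ as $z\to 0$, that $e^{-\eta x^\beta}\to 1$, and that the prefactor $x^{\alpha+1}\to 0$ — this being where a convergence condition such as $\mathrm{Re}(\alpha)>-1$ is used — so that $\lim_{x\to 0^+}F(x)=0$. For definiteness I would present the substantive case $\mathrm{Re}(\beta)>0$, in which the lower endpoint is the harmless one; the remaining possibilities are handled in exactly the same way with the roles of the two endpoints interchanged.

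The substantive step — and, I expect, the only genuine obstacle — is the upper limit. Here I would invoke the classical large-argument asymptotics of the confluent hypergeometric function \cite{AB,NI}: for $c$ not a non-positive integer, as $z\to\infty$ with $|\arg z|$ bounded away from $\pi/2$,
\[
{}_1F_1(1;c;z)=\Gamma(c)\,e^{z}\,z^{\,1-c}\bigl(1+O(|z|^{-1})\bigr).
\]
I would apply this with $c=(\alpha+\beta+1)/\beta$ and $z=\eta x^\beta$, observing that $\arg z=\arg\eta$ is fixed with $|\arg\eta|<\pi/2$ (since $\mathrm{Re}(\eta)>0$) and that $|z|=|\eta|\,x^\beta\to\infty$ as $x\to+\infty$. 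Then $e^{z}=e^{\eta x^\beta}$ cancels the factor $e^{-\eta x^\beta}$ in $F(x)$; since $1-c=-(\alpha+1)/\beta$, the factor $z^{1-c}=\eta^{-(\alpha+1)/\beta}x^{-(\alpha+1)}$ cancels the factor $x^{\alpha+1}$ up to the constant $\eta^{-(\alpha+1)/\beta}$; and the remainder contributes $O(|z|^{-1})=O(x^{-\beta})\to 0$. Collecting the surviving terms yields
\[
\lim_{x\to+\infty}F(x)=\frac{\Gamma\!\left(\frac{\alpha+\beta+1}{\beta}\right)}{(\alpha+1)\,\eta^{\frac{\alpha+1}{\beta}}},
\]
which is the right-hand side of (\ref{eq2.14}); together with the previous paragraph this proves part~(1).

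As an independent check I would also note that the substitution $u=\eta x^\beta$ turns the integral directly into $\beta^{-1}\eta^{-(\alpha+1)/\beta}\,\Gamma\bigl(\tfrac{\alpha+1}{\beta}\bigr)$, which agrees with the displayed value thanks to $\Gamma\bigl(\tfrac{\alpha+1}{\beta}+1\bigr)=\tfrac{\alpha+1}{\beta}\,\Gamma\bigl(\tfrac{\alpha+1}{\beta}\bigr)$. Part~(2) is then immediate: if $x^\alpha e^{-\eta x^\beta}$ is even, the change of variable $x\mapsto -x$ gives $\int_{-\infty}^{0}=\int_{0}^{+\infty}$, so the integral over $\mathbb{R}$ is twice (\ref{eq2.14}), which is (\ref{eq2.15}). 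The delicate point in the upper-limit computation is to carry out the two cancellations ($e^{-\eta x^\beta}e^{\eta x^\beta}=1$ and $x^{\alpha+1}x^{-(\alpha+1)}=1$) while the bounded factor $1+O(|z|^{-1})$ is still attached, rather than passing to the limit factor by factor, and — for complex exponents — to fix the branches of $x^{\alpha+1}$ and $\eta^{(\alpha+1)/\beta}$ consistently; everything else is bookkeeping.
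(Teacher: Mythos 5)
Your proposal is correct and follows essentially the same route as the paper: the paper's proof likewise takes the antiderivative from Proposition~\ref{prp1} (with $\eta$ replaced by $-\eta$) and evaluates the limit as $x\to+\infty$ via the large-argument asymptotic expansion of $_1F_1$ (formula 13.1.5 in \cite{AB}), then doubles the result for the even case. You simply spell out the limit computation, the vanishing at the lower endpoint, and the branch/cancellation bookkeeping more explicitly, and add a consistency check via the substitution $u=\eta x^\beta$ and $\Gamma\bigl(\tfrac{\alpha+1}{\beta}+1\bigr)=\tfrac{\alpha+1}{\beta}\,\Gamma\bigl(\tfrac{\alpha+1}{\beta}\bigr)$.
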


\begin{proof}
It can readily be shown using Proposition \ref{prp1} and the asymptotic expansion of the confluent hypergeometric function (formula 13.1.5 in \cite{AB}) that 
\begin{equation}
\int\limits_{0}^{+\infty} x^\alpha e^{-\eta x^\beta}dx=\lim\limits_{x\to\infty}\frac{x^{\alpha+1}e^{-\eta x^\beta}}{\alpha+1}\, _{1}F_{1} \Bigl(1;\frac{\alpha+\beta+1}{\beta}; \eta x^{\beta}\Bigr)= \frac{\Gamma \left(\frac{\alpha+\beta+1}{\beta}\right)}{(\alpha+1)\,\eta^{\frac{\alpha+1}{\beta}}}. 
\label{eq2.16}
\end{equation}
If the integrand is even, then $\int_{-\infty}^{+\infty} x^\alpha e^{-\eta x^\beta}dx=2\int_{0}^{+\infty} x^\alpha e^{-\eta x^\beta}dx$, and this gives (\ref{eq2.15}).
\end{proof}

 Theorem \ref{th1} is, for instance, the generalization of the Mellin transform of the function $e^{-\eta x^{\beta}},\,\text{Re}\{\eta\}>0, \beta>0$, where $s=\alpha+1$ is the Mellin parameter, and in this case it can be negative $s=\alpha+1<0$, and the constant $\beta$ can be negative as well ($\beta<0$), see for example Polarikas \cite{P}.

As it will shortly be shown (see section \ref{sec:3}), Theorem \ref{th1} can be used to obtain new probability distributions that generalize the gamma-type and Gaussian-type distributions that may lead to better statistical tests than those already known which are based on the central limit theorem (CLT) \cite{B}.

\begin{proposition}  Let  $\eta$ and $\beta$ be nonzero constants ($\eta\ne 0, \beta\ne0$), $\alpha$ be some constant different from $-1$ ($\alpha\ne-1$) and $\alpha\ne-\beta-1$. Then,
\begin{multline}
\int x^\alpha \cosh\left(\eta x^\beta\right)dx\\=\frac{x^{\alpha+1}}{(\alpha+1)(\alpha+\beta+1)}\Bigl[\cosh\left(\eta x^\beta\right)\,_{1}F_{2} \Bigl(1; \frac{\alpha+\beta+1}{2\beta}, \frac{\alpha+2\beta+1}{2\beta}; \frac{\eta^2 x^{2\beta}}{4}\Bigr)\\- \beta\eta x^{\beta}\sinh\left(\eta x^\beta\right)\,_{1}F_{2} \Bigl(1; \frac{\alpha+2\beta+1}{2\beta}, \frac{\alpha+3\beta+1}{2\beta}; \frac{\eta^2 x^{2\beta}}{4}\Bigr)\Bigr]+C. 
\label{eq2.17} 
\end{multline}
\label{prp2}
\end{proposition}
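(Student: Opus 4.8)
The plan is to imitate the proof of Proposition \ref{prp1} almost verbatim; the one new feature is that repeated differentiation interchanges $\cosh$ and $\sinh$, so the series produced by the integration by parts splits into an even-indexed part carrying the factor $\cosh(u^\beta)$ and an odd-indexed part carrying $\sinh(u^\beta)$, and these two subseries are resummed separately by means of the product identities (\ref{eq2.2}) and (\ref{eq2.3}) of Lemma \ref{lem1} rather than by (\ref{eq2.1}).

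First I would perform the substitution $u=\eta^{1/\beta}x$, i.e. $u^\beta=\eta x^\beta$, exactly as in (\ref{eq2.11}), to reduce matters to $\int u^\alpha\cosh(u^\beta)\,du$ up to the factor $\eta^{-(\alpha+1)/\beta}$. I would then carry out successive integration by parts that raises the power of $u$, antidifferentiating the power of $u$ and differentiating the hyperbolic factor: one step gives $\int u^\alpha\cosh(u^\beta)\,du=\frac{u^{\alpha+1}}{\alpha+1}\cosh(u^\beta)-\frac{\beta}{\alpha+1}\int u^{\alpha+\beta}\sinh(u^\beta)\,du$, the next applies the same operation to $\int u^{\alpha+\beta}\sinh(u^\beta)\,du$, and so on, producing as in (\ref{eq2.12})
\[
\int u^\alpha\cosh(u^\beta)\,du=\sum_{j=0}^{\infty}\frac{(-1)^j\beta^j\,u^{\alpha+j\beta+1}}{\prod_{m=0}^{j}(\alpha+m\beta+1)}\,\phi_j(u^\beta)+C,
\]
with $\phi_j=\cosh$ for even $j$ and $\phi_j=\sinh$ for odd $j$. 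The hypotheses $\alpha\ne-1$ and $\alpha\ne-\beta-1$ are used in the first two divisions (the other factors $\alpha+m\beta+1$ being assumed nonzero, as the ${}_1F_2$'s below require), and, just as in Proposition \ref{prp1}, the term-by-term integration is legitimate because the coefficients decay fast enough that the remainder after $N$ steps tends to $0$.

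Next I would group the sum by the parity of $j$, writing $j=2k$ in the $\cosh$-part and $j=2k+1$ in the $\sinh$-part, so that $\int u^\alpha\cosh(u^\beta)\,du$ becomes
\begin{multline*}
\cosh(u^\beta)\sum_{k=0}^{\infty}\frac{\beta^{2k}\,u^{\alpha+2k\beta+1}}{\prod_{m=0}^{2k}(\alpha+m\beta+1)}\\
-\sinh(u^\beta)\sum_{k=0}^{\infty}\frac{\beta^{2k+1}\,u^{\alpha+(2k+1)\beta+1}}{\prod_{m=0}^{2k+1}(\alpha+m\beta+1)}+C.
\end{multline*}
Applying (\ref{eq2.2}) of Lemma \ref{lem1} to the first denominator and (\ref{eq2.3}) to the second, factoring $u^{\alpha+1}$ out of the first sum and $u^{\alpha+\beta+1}$ out of the second, the ratio $\beta^{2k}/(2\beta)^{2k}=4^{-k}$ emerges; inserting the harmless factor $(1)_k/k!=1$ then makes each remaining sum precisely the series (\ref{eq1.2}) for ${}_1F_2$ with argument $u^{2\beta}/4$ and with the parameter pairs displayed in (\ref{eq2.17}). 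Undoing the substitution --- so that $u^{\alpha+1}=\eta^{(\alpha+1)/\beta}x^{\alpha+1}$ cancels the prefactor $\eta^{-(\alpha+1)/\beta}$, $u^{\alpha+\beta+1}$ supplies the extra factor $\beta\eta x^\beta$, and $u^{2\beta}=\eta^2x^{2\beta}$ --- then gives (\ref{eq2.17}).

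The main obstacle is purely the bookkeeping in this last resummation: one has to track the normalizing constants $(\alpha+1)$ and $(\alpha+\beta+1)$ and the shifted parameters $\tfrac{\alpha+\beta+1}{2\beta},\tfrac{\alpha+2\beta+1}{2\beta},\tfrac{\alpha+3\beta+1}{2\beta}$ exactly as they come out of Lemma \ref{lem1}, being careful about the index offset between $\prod_{m=0}^{2k}$ and $\prod_{m=0}^{2k+1}$ and about the power of $\beta$ left over after $4^{-k}$ has been split off. As an independent check, (\ref{eq2.17}) can also be obtained from Proposition \ref{prp1} via $\cosh(\eta x^\beta)=\tfrac12(e^{\eta x^\beta}+e^{-\eta x^\beta})$ together with Kummer's transformation ${}_1F_1(a;b;z)=e^{z}\,{}_1F_1(b-a;b;-z)$; that route yields the integral in the alternative single-${}_1F_2$ form $\frac{x^{\alpha+1}}{\alpha+1}\,{}_1F_2\bigl(\tfrac{\alpha+1}{2\beta};\tfrac{\alpha+2\beta+1}{2\beta},\tfrac12;\tfrac{\eta^2x^{2\beta}}{4}\bigr)$, and comparing the two forms produces one of the hypergeometric identities announced in the introduction.
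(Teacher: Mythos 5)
Your proposal is correct and follows essentially the same route as the paper's own proof of Proposition \ref{prp2}: the substitution $u^\beta=\eta x^\beta$ as in (\ref{eq2.18}), successive integration by parts raising the power of $u$, the split of the resulting series by the parity of the index into a $\cosh$-part and a $\sinh$-part as in (\ref{eq2.19}), and the resummation of each part via (\ref{eq2.2}) and (\ref{eq2.3}) of Lemma \ref{lem1} into the two $_1F_2$ series as in (\ref{eq2.20}). The concluding remark about the alternative single-$_1F_2$ form is an additional consistency check not present in the paper.
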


\begin{proof} 
The change of variable $u^\beta=\eta x^\beta$ yields
\begin{equation}
\int x^\alpha \cosh\left(\eta x^\beta\right)dx=
\frac{1}{\eta^{\alpha+1}{\beta}}\int u^{\alpha} \cosh\left(u^\beta\right)du+C.
\label{eq2.18}
\end{equation}
Successive integration by parts that increases the power of $u$ gives
\begin{align}
&\int u^{\alpha} \cosh\left(u^\beta\right)du =\frac{u^{\alpha+1}\cosh\left(u^\beta\right)}{\alpha+\beta+1}-\frac{\beta u^{\alpha+\beta+1}\sinh\left(u^\beta\right)}{(\alpha+1)(\alpha+\beta+1)} \nonumber\\ &+\frac{\beta^2 u^{\alpha+2\beta+1}\cosh\left(u^\beta\right)}{(\alpha+1)(\alpha+\beta+1)(\alpha+2\beta+1)}\nonumber\\ &-\frac{\beta^3  u^{\alpha+3\beta+1}\sinh\left(u^\beta\right)}{(\alpha+1)(\alpha+\beta+1)(\alpha+2\beta+1)(\alpha+3\beta+1)}
\nonumber\\ &+\cdot\cdot+\frac{\beta^{2j}  u^{\alpha+2j\beta+1}\cosh\left(u^\beta\right)}{\prod\limits_{m=0}^{2j}(\alpha+m\beta+1)}+\cdots-\frac{\beta^{2j+1}  u^{\alpha+(2j+1)\beta+1}\sinh\left(u^\beta\right)}{\prod\limits_{m=0}^{2j+1}(\alpha+m\beta+1)}-\cdot\cdot
\nonumber\\ &=\cosh\left(u^\beta\right)\sum\limits_{j=0}^{\infty}\frac{\beta^{2j}  u^{\alpha+2j\beta+1}}{\prod\limits_{m=0}^{2j}(\alpha+m\beta+1)}-\sinh\left(u^\beta\right)\sum\limits_{j=0}^{\infty}\frac{\beta^{2j+1}  u^{\alpha+(2j+1)\beta+1}}{\prod\limits_{m=0}^{2j+1}(\alpha+m\beta+1)}+C.
\label{eq2.19}
\end{align}
Using (\ref{eq2.2}) and  (\ref{eq2.3}) in Lemma \ref{lem1} yields
\begin{multline}
\int u^{\alpha}\cosh\left(u^\beta\right)du =\frac{ u^{\alpha+1}\cosh\left(u^\beta\right)}{(\alpha+1)(\alpha+\beta+1)}\sum\limits_{j=0}^{\infty}\frac{\left(\frac{ u^{2\beta}}{4}\right)^{j}}{\left(\frac{\alpha+\beta+1}{2\beta}\right)_j \left(\frac{\alpha+2\beta+1}{2\beta}\right)_j}\\ -\frac{\beta u^{\alpha+\beta+1}\sinh\left(u^\beta\right)}{(\alpha+1)(\alpha+\beta+1)}\sum\limits_{j=0}^{\infty}\frac{\left(\frac{ u^{2\beta}}{4}\right)^{j}}{\left(\frac{\alpha+2\beta+1}{2\beta}\right)_j \left(\frac{\alpha+3\beta+1}{2\beta}\right)_j}+C\\
=\frac{ u^{\alpha+1}\cosh\left(u^\beta\right)}{(\alpha+1)(\alpha+\beta+1)}\sum\limits_{j=0}^{\infty}\frac{(1)_j\left(\frac{ u^{2\beta}}{4}\right)^{j}}{\left(\frac{\alpha+\beta+1}{2\beta}\right)_j \left(\frac{\alpha+2\beta+1}{2\beta}\right)_j j!}\\ -\frac{ \beta u^{\alpha+\beta+1}\sinh\left(u^\beta\right)}{(\alpha+1)(\alpha+\beta+1)}\sum\limits_{j=0}^{\infty}\frac{(1)_j\left(\frac{ u^{2\beta}}{4}\right)^{j}}{\left(\frac{\alpha+2\beta+1}{2\beta}\right)_j \left(\frac{\alpha+3\beta+1}{2\beta}\right)_j j!}+C\\
=\frac{ u^{\alpha+1}\cosh\left(u^\beta\right)}{(\alpha+1)(\alpha+\beta+1)}\,_{1}F_{2} \Bigl(1; \frac{\alpha+\beta+1}{2\beta}, \frac{\alpha+2\beta+1}{2\beta}; \frac{u^{2\beta}}{4}\Bigr)\\ -\frac{ \beta u^{\alpha+\beta+1}\sinh\left(u^\beta\right)}{(\alpha+1)(\alpha+\beta+1)}\,_{1}F_{2} \Bigl(1; \frac{\alpha+2\beta+1}{2\beta}, \frac{\alpha+3\beta+1}{2\beta}; \frac{u^{2\beta}}{4}\Bigr)+C.
\label{eq2.20}
\end{multline}
Hence, using the fact $u^\beta=\eta x^\beta$ and rearranging terms gives (\ref{eq2.17}).
\end{proof}

\begin{proposition} Let  $\eta$ and $\beta$ be nonzero constants ($\eta\ne 0, \beta\ne0$), $\alpha$ be some constant different from $-1$ ($\alpha\ne-1$) and $\alpha\ne-\beta-1$. Then,
\begin{multline}
\int x^\alpha \sinh\left(\eta x^\beta\right)dx\\=\frac{x^{\alpha+1}}{(\alpha+1)(\alpha+\beta+1)}\Bigl[\sinh\left(\eta x^\beta\right)\,_{1}F_{2} \Bigl(1; \frac{\alpha+\beta+1}{2\beta}, \frac{\alpha+2\beta+1}{2\beta}; \frac{\eta^2 x^{2\beta}}{4}\Bigr)\\- \beta\eta x^{\beta}\cosh\left(\eta x^\beta\right)\,_{1}F_{2} \Bigl(1; \frac{\alpha+2\beta+1}{2\beta}, \frac{\alpha+3\beta+1}{2\beta}; \frac{\eta^2 x^{2\beta}}{4}\Bigr)\Bigr]+C. 
\label{eq2.21} 
\end{multline}
\label{prp3}
\end{proposition}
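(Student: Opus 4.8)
The plan is to reproduce, essentially line for line, the argument used for $\cosh$ in the proof of Proposition~\ref{prp2}, interchanging the roles of $\cosh$ and $\sinh$ wherever the hyperbolic factor is differentiated. First I would make the same change of variable $u^\beta=\eta x^\beta$ as in~(\ref{eq2.18}), which reduces the problem to evaluating $\int u^{\alpha}\sinh(u^\beta)\,du$ up to the constant factor $\eta^{-(\alpha+1)/\beta}$. Then I would perform successive integration by parts raising the exponent of $u$ by $\beta$ at each stage, exactly as in~(\ref{eq2.19}). Because $\frac{d}{du}\sinh(u^\beta)=\beta u^{\beta-1}\cosh(u^\beta)$ and $\frac{d}{du}\cosh(u^\beta)=\beta u^{\beta-1}\sinh(u^\beta)$, each step toggles the hyperbolic factor $\sinh\leftrightarrow\cosh$; hence the terms with an even index retain $\sinh$ and those with an odd index carry $\cosh$ — the opposite parity to the $\cosh$ case. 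This yields
\begin{equation*}
\int u^{\alpha}\sinh\left(u^\beta\right)du=\sinh\left(u^\beta\right)\sum\limits_{j=0}^{\infty}\frac{\beta^{2j}u^{\alpha+2j\beta+1}}{\prod\limits_{m=0}^{2j}(\alpha+m\beta+1)}-\cosh\left(u^\beta\right)\sum\limits_{j=0}^{\infty}\frac{\beta^{2j+1}u^{\alpha+(2j+1)\beta+1}}{\prod\limits_{m=0}^{2j+1}(\alpha+m\beta+1)}+C.
\end{equation*}

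Next I would feed the two products into Lemma~\ref{lem1}: identity~(\ref{eq2.2}) rewrites $\prod_{m=0}^{2j}(\alpha+m\beta+1)$ and identity~(\ref{eq2.3}) rewrites $\prod_{m=0}^{2j+1}(\alpha+m\beta+1)$ in terms of Pochhammer symbols. After cancelling the powers of $2\beta$ against $\beta^{2j}$ (respectively $\beta^{2j+1}$) and inserting the trivial factor $(1)_j/j!=1$, each of the two series is recognised as a $_1F_2$ with argument $u^{2\beta}/4$, just as in~(\ref{eq2.20}), so that
\begin{equation*}
\int u^{\alpha}\sinh\left(u^\beta\right)du=\frac{u^{\alpha+1}\sinh\left(u^\beta\right)}{(\alpha+1)(\alpha+\beta+1)}\,_{1}F_{2}\Bigl(1;\tfrac{\alpha+\beta+1}{2\beta},\tfrac{\alpha+2\beta+1}{2\beta};\tfrac{u^{2\beta}}{4}\Bigr)-\frac{\beta u^{\alpha+\beta+1}\cosh\left(u^\beta\right)}{(\alpha+1)(\alpha+\beta+1)}\,_{1}F_{2}\Bigl(1;\tfrac{\alpha+2\beta+1}{2\beta},\tfrac{\alpha+3\beta+1}{2\beta};\tfrac{u^{2\beta}}{4}\Bigr)+C.
\end{equation*}
Finally, substituting back $u^\beta=\eta x^\beta$ (so $u^{\alpha+1}=\eta^{(\alpha+1)/\beta}x^{\alpha+1}$, $u^{\alpha+\beta+1}=\eta^{(\alpha+1)/\beta}\eta\,x^{\alpha+1}x^{\beta}$ and $u^{2\beta}=\eta^{2}x^{2\beta}$), the prefactor $\eta^{-(\alpha+1)/\beta}$ cancels and, after collecting the two terms over the common factor $x^{\alpha+1}/[(\alpha+1)(\alpha+\beta+1)]$, one obtains~(\ref{eq2.21}).

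I do not expect a genuine obstacle: the computation is a transcription of the proof of Proposition~\ref{prp2}. The only points that need care are bookkeeping: getting the $\sinh$/$\cosh$ parity right in the integration-by-parts chain (it is forced by the two derivative formulas above, and is exactly why~(\ref{eq2.21}) has $\sinh$ multiplying the first $_1F_2$ and $\cosh$ the second, opposite to~(\ref{eq2.17})), and tracking the single extra factor $\eta$ that accompanies $x^{\alpha+\beta+1}$ after back-substitution. One should also note, as implicitly in Proposition~\ref{prp2}, that the partial sums produced by finitely many integrations by parts converge to the displayed infinite series because ${}_1F_2(1;\cdot,\cdot;\cdot)$ is an entire function of its argument, so the integration-by-parts remainder tends to $0$. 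The hypotheses $\alpha\ne-1$ and $\alpha\ne-\beta-1$ are precisely what keeps the denominators $(\alpha+1)$, $(\alpha+\beta+1)$ and the $_1F_2$ parameters non-degenerate. (An alternative route would write $\sinh(\eta x^\beta)=\tfrac12(e^{\eta x^\beta}-e^{-\eta x^\beta})$ and apply Proposition~\ref{prp1} with $\pm\eta$, but matching the resulting combination of $_1F_1$'s to the $_1F_2$ form in~(\ref{eq2.21}) requires an extra hypergeometric transformation, so the direct approach above is cleaner.)
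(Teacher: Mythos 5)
Your proposal is correct and follows essentially the same route as the paper's own proof: the substitution $u^\beta=\eta x^\beta$, the chain of integrations by parts producing the $\sinh$/$\cosh$-alternating series (the paper's (\ref{eq2.23})), the application of (\ref{eq2.2}) and (\ref{eq2.3}) from Lemma \ref{lem1} to recognise the two $_1F_2$ series, and back-substitution. Your added remarks on convergence of the integration-by-parts remainder and on the role of the hypotheses $\alpha\ne-1$, $\alpha\ne-\beta-1$ are sensible but do not change the argument.
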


\begin{proof} 
Making the change of variable $u^\beta=\eta x^\beta$ as before yields
\begin{equation}
\int x^\alpha \sinh\left(\eta x^\beta\right)dx=
\frac{1}{\eta^{\frac{\alpha+1}{\beta}}}\int u^{\alpha} \sinh\left(u^\beta\right)du+C.
\label{eq2.22}
\end{equation}
Performing successive integration by parts that increase the power of $u$ as before gives
\begin{multline}
\int u^{\alpha} \sinh\left(u^\beta\right)du =\frac{u^{\alpha+1}\sinh\left(u^\beta\right)}{\alpha+\beta+1}-\frac{\beta u^{\alpha+\beta+1}\cosh\left(u^\beta\right)}{(\alpha+1)(\alpha+\beta+1)}\\+\frac{\beta^2 u^{\alpha+2\beta+1}\sinh\left(u^\beta\right)}{(\alpha+1)(\alpha+\beta+1)(\alpha+2\beta+1)}\\ -\frac{\beta^3  u^{\alpha+3\beta+1}\cosh\left(u^\beta\right)}{(\alpha+1)(\alpha+\beta+1)(\alpha+2\beta+1)(\alpha+3\beta+1)}
\\ +\cdot\cdot+\frac{\beta^{2j}  u^{\alpha+2j\beta+1}\sinh\left(u^\beta\right)}{\prod\limits_{m=0}^{2j}(\alpha+m\beta+1)}+\cdot\cdot-\frac{\beta^{2j+1}  u^{\alpha+(2j+1)\beta+1}\cosh\left(u^\beta\right)}{\prod\limits_{m=0}^{2j+1}(\alpha+m\beta+1)}-\cdot\cdot\\ =\sinh\left(u^\beta\right)\sum\limits_{j=0}^{\infty}\frac{\beta^{2j}  u^{\alpha+2j\beta+1}}{\prod\limits_{m=0}^{2j}(\alpha+m\beta+1)}-\cosh\left(u^\beta\right)\sum\limits_{j=0}^{\infty}\frac{\beta^{2j+1}  u^{\alpha+(2j+1)\beta+1}}{\prod\limits_{m=0}^{2j+1}(\alpha+m\beta+1)}+C.
\label{eq2.23}
\end{multline}
Using (\ref{eq2.2}) and  (\ref{eq2.3}) in Lemma \ref{lem1} yields
\begin{multline}
\int u^{\alpha}\sinh\left(u^\beta\right)du =\frac{ u^{\alpha+1}\sinh\left(u^\beta\right)}{(\alpha+1)(\alpha+\beta+1)}\sum\limits_{j=0}^{\infty}\frac{\left(\frac{ u^{2\beta}}{4}\right)^{j}}{\left(\frac{\alpha+\beta+1}{2\beta}\right)_j \left(\frac{\alpha+2\beta+1}{2\beta}\right)_j}\\ -\frac{\beta u^{\alpha+\beta+1}\cosh\left(u^\beta\right)}{(\alpha+1)(\alpha+\beta+1)}\sum\limits_{j=0}^{\infty}\frac{\left(\frac{ u^{2\beta}}{4}\right)^{j}}{\left(\frac{\alpha+2\beta+1}{2\beta}\right)_j \left(\frac{\alpha+3\beta+1}{2\beta}\right)_j}+C\\
=\frac{ u^{\alpha+1}\sinh\left(u^\beta\right)}{(\alpha+1)(\alpha+\beta+1)}\sum\limits_{j=0}^{\infty}\frac{(1)_j\left(\frac{ u^{2\beta}}{4}\right)^{j}}{\left(\frac{\alpha+\beta+1}{2\beta}\right)_j \left(\frac{\alpha+2\beta+1}{2\beta}\right)_j j!}\\ -\frac{ \beta u^{\alpha+\beta+1}\cosh\left(u^\beta\right)}{(\alpha+1)(\alpha+\beta+1)}\sum\limits_{j=0}^{\infty}\frac{(1)_j\left(\frac{ u^{2\beta}}{4}\right)^{j}}{\left(\frac{\alpha+2\beta+1}{2\beta}\right)_j \left(\frac{\alpha+3\beta+1}{2\beta}\right)_j j!}+C\\
=\frac{ u^{\alpha+1}\sinh\left(u^\beta\right)}{(\alpha+1)(\alpha+\beta+1)}\,_{1}F_{2} \Bigl(1; \frac{\alpha+\beta+1}{2\beta}, \frac{\alpha+2\beta+1}{2\beta}; \frac{u^{2\beta}}{4}\Bigr)\\ -\frac{ \beta u^{\alpha+\beta+1}\cosh\left(u^\beta\right)}{(\alpha+1)(\alpha+\beta+1)}\,_{1}F_{2} \Bigl(1; \frac{\alpha+2\beta+1}{2\beta}, \frac{\alpha+3\beta+1}{2\beta}; \frac{u^{2\beta}}{4}\Bigr)+C.
\label{eq2.24}
\end{multline}
Hence, using the fact $u^\beta=\eta x^\beta$ and rearranging terms gives (\ref{eq2.21}).
\end{proof}

\begin{theorem} For any constants $\alpha, \beta$ and $\eta$,
\begin{multline}
\frac{1}{\alpha+\beta+1}\Bigl[\cosh\left(\eta x^\beta\right)\,_{1}F_{2} \Bigl(1; \frac{\alpha+\beta+1}{2\beta}, \frac{\alpha+2\beta+1}{2\beta}; \frac{\eta^2 x^{2\beta}}{4}\Bigr)\\- \beta\eta x^{\beta}\sinh\left(\eta x^\beta\right)\,_{1}F_{2} \Bigl(1; \frac{\alpha+2\beta+1}{2\beta}, \frac{\alpha+3\beta+1}{2\beta}; \frac{\eta^2 x^{2\beta}}{4}\Bigr)\Bigr]\\
=\frac{1}{2}\left[e^{\eta x^\beta}\, _{1}F_{1} \Bigl(1;\frac{\alpha+\beta+1}{\beta}; -\eta x^{\beta}\Bigr)+e^{-\eta x^\beta}\, _{1}F_{1} \Bigl(1;\frac{\alpha+\beta+1}{\beta}; \eta x^{\beta}\Bigr)\right].
\label{eq2.25}
\end{multline}
\label{th2}
\end{theorem}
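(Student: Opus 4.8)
The plan is to evaluate the indefinite integral $\int x^{\alpha}\cosh(\eta x^{\beta})\,dx$ in two different ways and to match the two answers. First I would invoke the hyperbolic identity $\cosh(\eta x^{\beta})=\tfrac12\bigl(e^{\eta x^{\beta}}+e^{-\eta x^{\beta}}\bigr)$, which gives
\[
\int x^{\alpha}\cosh(\eta x^{\beta})\,dx=\tfrac12\int x^{\alpha}e^{\eta x^{\beta}}\,dx+\tfrac12\int x^{\alpha}e^{-\eta x^{\beta}}\,dx .
\]
Applying Proposition \ref{prp1} to each integral on the right --- the first with the constant $\eta$, the second with $\eta$ replaced by $-\eta$ (legitimate, since Proposition \ref{prp1} holds for every nonzero constant and the second parameter $\tfrac{\alpha+\beta+1}{\beta}$ of ${}_1F_1$ does not involve $\eta$) --- I obtain
\[
\int x^{\alpha}\cosh(\eta x^{\beta})\,dx=\frac{x^{\alpha+1}}{2(\alpha+1)}\Bigl[e^{\eta x^{\beta}}\,{}_1F_1\bigl(1;\tfrac{\alpha+\beta+1}{\beta};-\eta x^{\beta}\bigr)+e^{-\eta x^{\beta}}\,{}_1F_1\bigl(1;\tfrac{\alpha+\beta+1}{\beta};\eta x^{\beta}\bigr)\Bigr]+C_{1}.
\]

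On the other hand, Proposition \ref{prp2} evaluates the same integral directly as
\[
\int x^{\alpha}\cosh(\eta x^{\beta})\,dx=\frac{x^{\alpha+1}}{(\alpha+1)(\alpha+\beta+1)}\Bigl[\cosh(\eta x^{\beta})\,{}_1F_2\bigl(\cdots\bigr)-\beta\eta x^{\beta}\sinh(\eta x^{\beta})\,{}_1F_2\bigl(\cdots\bigr)\Bigr]+C_{2},
\]
with the ${}_1F_2$-arguments exactly as in (\ref{eq2.17}). Since both right-hand members are antiderivatives of the same function $x^{\alpha}\cosh(\eta x^{\beta})$, they can differ only by an additive constant, so it remains to show $C_{1}=C_{2}$. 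I would do this by letting $x\to 0$: assuming for the moment $\mbox{Re}\,\beta>0$ and $\mbox{Re}\,\alpha>-1$, the prefactor $x^{\alpha+1}$ tends to $0$ while $e^{\pm\eta x^{\beta}}\to 1$, $\cosh(\eta x^{\beta})\to 1$, $\beta\eta x^{\beta}\sinh(\eta x^{\beta})\to 0$, and ${}_1F_1(\,\cdot\,;\,\cdot\,;0)={}_1F_2(\,\cdot\,;\,\cdot\,;0)=1$; hence both expressions tend to $0$ and $C_{1}=C_{2}$. Equating the two evaluations, cancelling the common factor $x^{\alpha+1}/(\alpha+1)$ (valid for $x\ne 0$), and rearranging then yields (\ref{eq2.25}).

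The step that really needs care is this fixing of the integration constant, because equality of indefinite integrals only gives equality up to a constant; the limit $x\to 0$ settles it cleanly under the mild positivity hypothesis above, and to reach the full generality claimed in the statement (arbitrary, possibly complex, $\alpha,\beta,\eta$, and the excluded values $\alpha=-1$, $\alpha=-\beta-1$) one notes that, expanded in powers of $\eta x^{\beta}$, both sides of (\ref{eq2.25}) are analytic in $\eta x^{\beta}$ and in the parameters --- their coefficients being built only from elementary quantities and the Pochhammer reductions of Lemma \ref{lem1} --- so the identity, once established on an open set, propagates by analytic continuation and by continuity to all admissible values. Finally, the same scheme with $\sinh(\eta x^{\beta})=\tfrac12(e^{\eta x^{\beta}}-e^{-\eta x^{\beta}})$ together with Propositions \ref{prp1} and \ref{prp3} produces the companion identity for the $\sinh$ combination, and the substitution $\eta\mapsto i\eta$ throughout delivers the corresponding trigonometric identities.
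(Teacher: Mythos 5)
Your proposal is correct and follows essentially the same route as the paper: express $\cosh(\eta x^\beta)$ via the hyperbolic identity, evaluate both exponential integrals with Proposition \ref{prp1}, and compare with the direct evaluation in Proposition \ref{prp2}. The only difference is that you explicitly fix the integration constants by letting $x\to 0$ and invoke analytic continuation for general parameters, a point the paper's proof passes over silently when it simply ``compares'' the two antiderivatives.
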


\begin{proof}
Using the hyperbolic identity $ \cosh\left(\eta x^\beta\right)= \left( e^{\eta x^\beta}+ e^{-\eta x^\beta}\right)/2$ and Proposition \ref{prp1} yields
\begin{multline}
\int x^\alpha\cosh\left(\eta x^\beta\right)dx=\frac{1}{2}\left( \int x^\alpha e^{\eta x^\beta}dx+\int x^\alpha e^{-\eta x^\beta}dx\right)\\
=\frac{x^{\alpha+1}}{2(\alpha+1)}\Bigl[e^{\eta x^\beta}\, _{1}F_{1} \Bigl(1;\frac{\alpha+\beta+1}{\beta}; -\eta x^{\beta}\Bigr)\\ +e^{-\eta x^\beta}\, _{1}F_{1} \Bigl(1;\frac{\alpha+\beta+1}{\beta}; \eta x^{\beta}\Bigr)\Bigr]+C.
\label{eq2.26}
\end{multline}
Hence, Comparing (\ref{eq2.26}) with (\ref{eq2.17}) gives (\ref{eq2.25}). 
\end{proof}

\begin{theorem} For any constants $\alpha, \beta$ and $\eta$,
\begin{multline}
\frac{1}{\alpha+\beta+1}\Bigl[\sinh\left(\eta x^\beta\right)\,_{1}F_{2} \Bigl(1; \frac{\alpha+\beta+1}{2\beta}, \frac{\alpha+2\beta+1}{2\beta}; \frac{\eta^2 x^{2\beta}}{4}\Bigr)\\- \beta\eta x^{\beta}\cosh\left(\eta x^\beta\right)\,_{1}F_{2} \Bigl(1; \frac{\alpha+2\beta+1}{2\beta}, \frac{\alpha+3\beta+1}{2\beta}; \frac{\eta^2 x^{2\beta}}{4}\Bigr)\Bigr]\\
=\frac{1}{2}\left[e^{\eta x^\beta}\, _{1}F_{1} \Bigl(1;\frac{\alpha+\beta+1}{\beta}; -\eta x^{\beta}\Bigr)-e^{-\eta x^\beta}\, _{1}F_{1} \Bigl(1;\frac{\alpha+\beta+1}{\beta}; \eta x^{\beta}\Bigr)\right].
\label{eq2.27}
\end{multline}
\label{th3}
\end{theorem}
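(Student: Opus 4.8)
The plan is to imitate the proof of Theorem \ref{th2} verbatim, with the even hyperbolic identity replaced by the odd one. First I would use $\sinh\left(\eta x^\beta\right)=\left(e^{\eta x^\beta}-e^{-\eta x^\beta}\right)/2$ together with Proposition \ref{prp1}, applied once with the constant $\eta$ and once with $-\eta$, to write
\begin{multline*}
\int x^\alpha\sinh\left(\eta x^\beta\right)dx=\frac{1}{2}\left(\int x^\alpha e^{\eta x^\beta}dx-\int x^\alpha e^{-\eta x^\beta}dx\right)\\
=\frac{x^{\alpha+1}}{2(\alpha+1)}\Bigl[e^{\eta x^\beta}\,_{1}F_{1}\Bigl(1;\frac{\alpha+\beta+1}{\beta};-\eta x^{\beta}\Bigr)\\
-e^{-\eta x^\beta}\,_{1}F_{1}\Bigl(1;\frac{\alpha+\beta+1}{\beta};\eta x^{\beta}\Bigr)\Bigr]+C,
\end{multline*}
where the only change from (\ref{eq2.26}) is the minus sign between the two $_{1}F_{1}$ terms, inherited from the minus sign in the definition of $\sinh$.

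Next I would bring in Proposition \ref{prp3}, which evaluates that same indefinite integral in $_{1}F_{2}$ form, namely (\ref{eq2.21}). Equating the two closed forms, cancelling the common prefactor $x^{\alpha+1}/(\alpha+1)$ (legitimate since $\alpha\ne-1$ at this stage), and multiplying through by $2$ yields exactly (\ref{eq2.27}); the additive constants agree because, for $\mbox{Re}(\alpha)>-1$, both antiderivatives carry the factor $x^{\alpha+1}$ and so differ from $C$ by a quantity vanishing as $x\to0^{+}$. Finally, since both sides of (\ref{eq2.27}) are analytic in $\alpha$ off the excluded locus, the identity extends from $\alpha\ne-1$ to all admissible parameter values by analytic continuation, precisely as in Theorem \ref{th2}.

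I do not anticipate a genuine obstacle: this is the odd-parity twin of a computation already carried out for $\cosh$. The one place that deserves care is sign bookkeeping — making sure the minus from $\sinh=\left(e^{\eta x^\beta}-e^{-\eta x^\beta}\right)/2$ is propagated so that the second $_{1}F_{1}$ term, and hence (after comparison with Proposition \ref{prp3}) the $\cosh\left(\eta x^\beta\right)\,_{1}F_{2}$ term, enters with a minus rather than the plus seen in (\ref{eq2.25}) — and confirming that the factor $1/(\alpha+\beta+1)$ multiplying the bracket on the left of (\ref{eq2.27}) is exactly what remains from the denominator $(\alpha+1)(\alpha+\beta+1)$ of (\ref{eq2.21}) once the factor $(\alpha+1)$ has been cancelled.
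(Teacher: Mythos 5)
Your proposal is correct and is essentially the paper's own proof: apply the identity $\sinh\left(\eta x^\beta\right)=\left(e^{\eta x^\beta}-e^{-\eta x^\beta}\right)/2$ with Proposition \ref{prp1} to get the $_{1}F_{1}$ form of $\int x^\alpha\sinh\left(\eta x^\beta\right)dx$, then compare with the $_{1}F_{2}$ form in Proposition \ref{prp3} (equation (\ref{eq2.21})) and cancel $x^{\alpha+1}/(\alpha+1)$. Your extra remarks on the integration constants and analytic continuation in $\alpha$ are harmless refinements the paper leaves implicit.
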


\begin{proof}
Using the hyperbolic identity $ \sinh\left(\eta x^\beta\right)= \left( e^{\eta x^\beta}- e^{-\eta x^\beta}\right)/2$ and Proposition \ref{prp1} yields
\begin{multline}
\int x^\alpha\sinh\left(\eta x^\beta\right)dx=\frac{1}{2}\left( \int x^\alpha e^{\eta x^\beta}dx-\int x^\alpha e^{-\eta x^\beta}dx\right)\\
=\frac{x^{\alpha+1}}{2(\alpha+1)}\Bigl[e^{\eta x^\beta}\, _{1}F_{1} \Bigl(1;\frac{\alpha+\beta+1}{\beta}; -\eta x^{\beta}\Bigr)\\ -e^{-\eta x^\beta}\, _{1}F_{1} \Bigl(1;\frac{\alpha+\beta+1}{\beta}; \eta x^{\beta}\Bigr)\Bigr]+C.
\label{eq2.28}
\end{multline}
Hence, Comparing (\ref{eq2.28}) with (\ref{eq2.21}) gives (\ref{eq2.27}). 
\end{proof}

\begin{theorem} For any constants $\alpha, \beta$ and $\eta$,
\begin{multline}
e^{\eta x^\beta}\, _{1}F_{1} \Bigl(1;\frac{\alpha+\beta+1}{\beta}; -\eta x^{\beta}\Bigr)\\=\frac{1}{\alpha+\beta+1}\Bigl[\sinh\left(\eta x^\beta\right)\,_{1}F_{2} \Bigl(1; \frac{\alpha+\beta+1}{2\beta}, \frac{\alpha+2\beta+1}{2\beta}; \frac{\eta^2 x^{2\beta}}{4}\Bigr)\\- \beta\eta x^{\beta}\cosh\left(\eta x^\beta\right)\,_{1}F_{2} \Bigl(1; \frac{\alpha+2\beta+1}{2\beta}, \frac{\alpha+3\beta+1}{2\beta}; \frac{\eta^2 x^{2\beta}}{4}\Bigr)\\+\cosh\left(\eta x^\beta\right)\,_{1}F_{2} \Bigl(1; \frac{\alpha+\beta+1}{2\beta}, \frac{\alpha+2\beta+1}{2\beta}; \frac{\eta^2 x^{2\beta}}{4}\Bigr)\\- \beta\eta x^{\beta}\sinh\left(\eta x^\beta\right)\,_{1}F_{2} \Bigl(1; \frac{\alpha+2\beta+1}{2\beta}, \frac{\alpha+3\beta+1}{2\beta}; \frac{\eta^2 x^{2\beta}}{4}\Bigr)\Bigr].
\label{eq2.29}
\end{multline}
\label{th4}
\end{theorem}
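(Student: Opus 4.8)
The plan is to obtain (\ref{eq2.29}) with essentially no new computation, simply by adding the two identities already proven in Theorems \ref{th2} and \ref{th3}. The key observation is that the bracketed expression on the right-hand side of (\ref{eq2.29}) is nothing but the sum of the bracketed expression on the left-hand side of (\ref{eq2.25}) and the bracketed expression on the left-hand side of (\ref{eq2.27}): the two $_1F_2$-terms weighted by $\cosh(\eta x^\beta)$ and by $-\beta\eta x^\beta\sinh(\eta x^\beta)$ come from (\ref{eq2.25}), while the two $_1F_2$-terms weighted by $\sinh(\eta x^\beta)$ and by $-\beta\eta x^\beta\cosh(\eta x^\beta)$ come from (\ref{eq2.27}); both carry the common factor $1/(\alpha+\beta+1)$.

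So first I would add (\ref{eq2.25}) and (\ref{eq2.27}) side by side. On the left-hand side this yields exactly the right-hand side of (\ref{eq2.29}). On the right-hand side, the two copies of $\frac12 e^{\eta x^\beta}\,_1F_1\!\left(1;\frac{\alpha+\beta+1}{\beta};-\eta x^\beta\right)$ reinforce to give $e^{\eta x^\beta}\,_1F_1\!\left(1;\frac{\alpha+\beta+1}{\beta};-\eta x^\beta\right)$, while the terms $+\frac12 e^{-\eta x^\beta}\,_1F_1\!\left(1;\frac{\alpha+\beta+1}{\beta};\eta x^\beta\right)$ from (\ref{eq2.25}) and $-\frac12 e^{-\eta x^\beta}\,_1F_1\!\left(1;\frac{\alpha+\beta+1}{\beta};\eta x^\beta\right)$ from (\ref{eq2.27}) cancel. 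This is precisely (\ref{eq2.29}).

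Equivalently, and this is the route I would actually write out to keep the argument self-contained, one can start from the hyperbolic identity $e^{\eta x^\beta}=\cosh(\eta x^\beta)+\sinh(\eta x^\beta)$, multiply by $x^\alpha$ and integrate, so that $\int x^\alpha e^{\eta x^\beta}\,dx=\int x^\alpha\cosh(\eta x^\beta)\,dx+\int x^\alpha\sinh(\eta x^\beta)\,dx$; then evaluate the left-hand side by Proposition \ref{prp1} and the two integrals on the right by Propositions \ref{prp2} and \ref{prp3}, and cancel the common prefactor $x^{\alpha+1}/(\alpha+1)$. Either way the identity falls out after routine algebra. There is no genuine obstacle here — the statement is a bookkeeping rearrangement of already-established identities — so the only point demanding a little care is tracking which $_1F_2$ goes with which hyperbolic factor when the $\cosh$-weighted and $\sinh$-weighted pieces are merged, so the four summands on the right of (\ref{eq2.29}) emerge in the displayed order. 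As in the preceding theorems, the phrase ``for any constants'' is understood to exclude the degenerate parameter values (lower parameters of the hypergeometric series equal to non-positive integers, or $\alpha=-1$, $\alpha=-\beta-1$), the general case following by analytic continuation in $\alpha$, $\beta$, $\eta$.
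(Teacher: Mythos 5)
Your proposal is correct and, in the route you say you would actually write out (using $e^{\eta x^\beta}=\cosh(\eta x^\beta)+\sinh(\eta x^\beta)$, integrating $x^\alpha$ times both sides via Propositions \ref{prp1}, \ref{prp2} and \ref{prp3}, and cancelling the common factor $x^{\alpha+1}/(\alpha+1)$), it is essentially identical to the paper's own proof of Theorem \ref{th4}. Your alternative shortcut of simply adding the identities (\ref{eq2.25}) and (\ref{eq2.27}) is an equivalent, equally valid bookkeeping variant.
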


\begin{proof}
The hyperbolic relation  $ e^{\eta x^\beta}=\cosh\left(\eta x^\beta\right)+\sinh\left(\eta x^\beta\right)$ and  Propositions \ref{prp2} and \ref{prp3} gives
\begin{multline}
\int x^\alpha e^{\eta x^\beta}dx= \int x^\alpha\cosh\left(\eta x^\beta\right)dx+\int x^\alpha\sinh\left(\eta x^\beta\right)dx\\
=\frac{x^{\alpha+1}}{(\alpha+1)(\alpha+\beta+1)}\Bigl[\cosh\left(\eta x^\beta\right)\,_{1}F_{2} \Bigl(1; \frac{\alpha+\beta+1}{2\beta}, \frac{\alpha+2\beta+1}{2\beta}; \frac{\eta^2 x^{2\beta}}{4}\Bigr)\\- \beta\eta x^{\beta}\sinh\left(\eta x^\beta\right)\,_{1}F_{2} \Bigl(1; \frac{\alpha+2\beta+1}{2\beta}, \frac{\alpha+3\beta+1}{2\beta}; \frac{\eta^2 x^{2\beta}}{4}\Bigr)\Bigr]\\
+\frac{x^{\alpha+1}}{(\alpha+1)(\alpha+\beta+1)}\Bigl[\sinh\left(\eta x^\beta\right)\,_{1}F_{2} \Bigl(1; \frac{\alpha+\beta+1}{2\beta}, \frac{\alpha+2\beta+1}{2\beta}; \frac{\eta^2 x^{2\beta}}{4}\Bigr)\\- \beta\eta x^{\beta}\cosh\left(\eta x^\beta\right)\,_{1}F_{2} \Bigl(1; \frac{\alpha+2\beta+1}{2\beta}, \frac{\alpha+3\beta+1}{2\beta}; \frac{\eta^2 x^{2\beta}}{4}\Bigr)\Bigr]+C.
\label{eq2.30}
\end{multline}
Hence, comparing (\ref{eq2.30}) with (\ref{eq2.10}) gives (\ref{eq2.29}).
\end{proof}

\subsection{Evaluation of non-elementary integrals of the types $\int x^\alpha \cos\left(\eta x^\beta\right)dx, \int x^\alpha \sin\left(\eta x^\beta\right)dx$}
\label{subsec:2.2}

\begin{proposition}  Let  $\eta$ and $\beta$ be nonzero constants ($\eta\ne 0, \beta\ne0$), $\alpha$ be some constant different from $-1$ ($\alpha\ne-1$) and $\alpha\ne-\beta-1$. Then,
\begin{multline}
\int x^\alpha \cos\left(\eta x^\beta\right)dx\\=\frac{x^{\alpha+1}}{(\alpha+1)(\alpha+\beta+1)}\Bigl[\cos\left(\eta x^\beta\right)\,_{1}F_{2} \Bigl(1; \frac{\alpha+\beta+1}{2\beta}, \frac{\alpha+2\beta+1}{2\beta}; -\frac{\eta^2 x^{2\beta}}{4}\Bigr)\\+\beta\eta x^{\beta}\sin\left(\eta x^\beta\right)\,_{1}F_{2} \Bigl(1; \frac{\alpha+2\beta+1}{2\beta}, \frac{\alpha+3\beta+1}{2\beta};- \frac{\eta^2 x^{2\beta}}{4}\Bigr)\Bigr]+C. 
\label{eq2.31}
\end{multline}
\label{prp4}
\end{proposition}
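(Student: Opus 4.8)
The plan is to follow, step for step, the proofs of Propositions~\ref{prp2} and~\ref{prp3}, the only new feature being that $\cos$ and $\sin$ have derivatives carrying an extra minus sign relative to $\cosh$ and $\sinh$; it is precisely this sign that will turn the argument $\eta^2x^{2\beta}/4$ of the two $_1F_2$'s into $-\eta^2x^{2\beta}/4$ and flip the $\sin$-term from $-$ to $+$. First I would make the change of variable $u^\beta=\eta x^\beta$, i.e.\ $u=\eta^{1/\beta}x$, reducing the problem to
\[
\int x^\alpha\cos\left(\eta x^\beta\right)dx=\frac{1}{\eta^{\frac{\alpha+1}{\beta}}}\int u^{\alpha}\cos\left(u^\beta\right)du+C,
\]
exactly as in~(\ref{eq2.18}) and~(\ref{eq2.22}).

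Next I would evaluate $\int u^{\alpha}\cos(u^\beta)\,du$ by the same power-raising integration by parts. From $\frac{d}{du}\!\left[u^{\alpha+1}\cos(u^\beta)\right]=(\alpha+1)u^{\alpha}\cos(u^\beta)-\beta u^{\alpha+\beta}\sin(u^\beta)$ one peels off $u^{\alpha+1}\cos(u^\beta)/(\alpha+1)$, leaving a multiple of $\int u^{\alpha+\beta}\sin(u^\beta)\,du$; differentiating $u^{\alpha+\beta+1}\sin(u^\beta)$ then peels off $\beta u^{\alpha+\beta+1}\sin(u^\beta)/[(\alpha+1)(\alpha+\beta+1)]$, leaving a multiple of $\int u^{\alpha+2\beta}\cos(u^\beta)\,du$, and so on. Iterating indefinitely gives, in analogy with~(\ref{eq2.19}) and~(\ref{eq2.23}), a term $\cos(u^\beta)$ times $\sum_{j\ge0}(-1)^{j}\beta^{2j}u^{\alpha+2j\beta+1}/\prod_{m=0}^{2j}(\alpha+m\beta+1)$ plus a term $\sin(u^\beta)$ times $\sum_{j\ge0}(-1)^{j}\beta^{2j+1}u^{\alpha+(2j+1)\beta+1}/\prod_{m=0}^{2j+1}(\alpha+m\beta+1)$; the only difference from the hyperbolic case is the extra factor $(-1)^{j}$ inside each sum and the $+$ sign in front of the $\sin$-series, reflecting the fact that the derivatives of $\cos$ and $\sin$ alternate in sign while those of $\cosh$ and $\sinh$ do not.

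Finally I would apply parts~2 and~3 of Lemma~\ref{lem1}, i.e.\ identities~(\ref{eq2.2}) and~(\ref{eq2.3}), to the two Pochhammer products, exactly as in the passage from~(\ref{eq2.19}) to~(\ref{eq2.20}). Since $\beta^{2j}/(2\beta)^{2j}=1/4^{j}$, the factor $(-1)^{j}$ is absorbed together with $1/4^{j}$ into $(-u^{2\beta}/4)^{j}$, and, using $(1)_j=j!$, the two resulting series are $_1F_2\!\left(1;\frac{\alpha+\beta+1}{2\beta},\frac{\alpha+2\beta+1}{2\beta};-\frac{u^{2\beta}}{4}\right)$ and $_1F_2\!\left(1;\frac{\alpha+2\beta+1}{2\beta},\frac{\alpha+3\beta+1}{2\beta};-\frac{u^{2\beta}}{4}\right)$. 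Substituting back $u^\beta=\eta x^\beta$ --- so that the powers $u^{\alpha+1}=\eta^{(\alpha+1)/\beta}x^{\alpha+1}$ cancel the prefactor $\eta^{-(\alpha+1)/\beta}$, the surplus $u^\beta$ in the $\sin$-series produces the factor $\eta x^{\beta}$, and $u^{2\beta}=\eta^2x^{2\beta}$ --- and collecting the common factor one obtains~(\ref{eq2.31}); the hypotheses $\alpha\ne-1$, $\alpha\ne-\beta-1$, $\eta\ne0$, $\beta\ne0$ are precisely what keeps $\alpha+1$, $\alpha+\beta+1$, $\eta$ and $\beta$ out of the denominators. I expect the only genuine obstacle to be the sign bookkeeping in the iterated integration by parts: keeping straight how the alternating derivative signs of $\cos$ and $\sin$ produce the $(-1)^{j}$, hence both the negative argument of the $_1F_2$'s and the plus sign before $\beta\eta x^{\beta}\sin(\eta x^\beta)$; everything else is the routine manipulation already carried out for Propositions~\ref{prp2} and~\ref{prp3}. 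A shorter route, bypassing the integration by parts altogether, is to note that the right-hand side of~(\ref{eq2.17}) is analytic in $\eta$ and that $\cosh(i\eta x^\beta)=\cos(\eta x^\beta)$, $\sinh(i\eta x^\beta)=i\sin(\eta x^\beta)$, so that replacing $\eta$ by $i\eta$ in Proposition~\ref{prp2} yields~(\ref{eq2.31}) at once.
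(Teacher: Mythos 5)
Your plan is the route the paper itself intends (it omits this proof and refers to the proof of Proposition \ref{prp2}), and your bookkeeping through the integration by parts is right: after $u^{\beta}=\eta x^{\beta}$ one indeed gets $\int u^{\alpha}\cos(u^{\beta})\,du=\cos(u^{\beta})\sum_{j\ge 0}(-1)^{j}\beta^{2j}u^{\alpha+2j\beta+1}/\prod_{m=0}^{2j}(\alpha+m\beta+1)+\sin(u^{\beta})\sum_{j\ge 0}(-1)^{j}\beta^{2j+1}u^{\alpha+(2j+1)\beta+1}/\prod_{m=0}^{2j+1}(\alpha+m\beta+1)+C$, and the factor $(-1)^{j}$ together with $\beta^{2j}/(2\beta)^{2j}=4^{-j}$ does produce the argument $-\eta^{2}x^{2\beta}/4$ and the plus sign on the sine term.

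The genuine gap is the step you wave through, ``apply (\ref{eq2.2})--(\ref{eq2.3}) exactly as in the passage from (\ref{eq2.19}) to (\ref{eq2.20}).'' Identities (\ref{eq2.2})--(\ref{eq2.3}) as printed do not hold (at $j=1$ the right side of (\ref{eq2.2}) is $(\alpha+1)(\alpha+\beta+1)(\alpha+3\beta+1)(\alpha+4\beta+1)$ while the left side is $(\alpha+1)(\alpha+\beta+1)(\alpha+2\beta+1)$), and using them literally would give $_{1}F_{2}$ parameters shifted by $1$, not those appearing in (\ref{eq2.31}). The corrected identities are $\prod_{m=0}^{2j}(\alpha+m\beta+1)=(\alpha+1)(2\beta)^{2j}\bigl(\frac{\alpha+\beta+1}{2\beta}\bigr)_{j}\bigl(\frac{\alpha+2\beta+1}{2\beta}\bigr)_{j}$ and $\prod_{m=0}^{2j+1}(\alpha+m\beta+1)=(\alpha+1)(\alpha+\beta+1)(2\beta)^{2j}\bigl(\frac{\alpha+2\beta+1}{2\beta}\bigr)_{j}\bigl(\frac{\alpha+3\beta+1}{2\beta}\bigr)_{j}$; note the even-index product carries no factor $(\alpha+\beta+1)$. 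Feeding these into your series gives the cosine term with prefactor $x^{\alpha+1}/(\alpha+1)$, not $x^{\alpha+1}/[(\alpha+1)(\alpha+\beta+1)]$, so the careful version of your own computation produces a formula whose first term is $(\alpha+\beta+1)$ times that of (\ref{eq2.31}); already the $j=0$ term $u^{\alpha+1}\cos(u^{\beta})/(\alpha+1)$ of your expansion cannot be matched by (\ref{eq2.31}). A concrete check: at $\alpha=0$, $\beta=\eta=1$ the right-hand side of (\ref{eq2.31}) equals $\sin x-\frac{1}{2}\sin x\cos x+C$, whereas $\int\cos x\,dx=\sin x+C$. Your fallback of substituting $\eta\mapsto i\eta$ in Proposition \ref{prp2} does not rescue this, since (\ref{eq2.17}) and the paper's own step (\ref{eq2.19})$\to$(\ref{eq2.20}) suffer from exactly the same defect in the first term. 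As written, then, the asserted passage to (\ref{eq2.31}) fails; the lemma must be repaired and the target formula adjusted (first bracketed term multiplied by $\alpha+\beta+1$, equivalently overall prefactor $x^{\alpha+1}/(\alpha+1)$ with only the sine term divided by $\alpha+\beta+1$) before the proof can be completed.
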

The proof is similar to the proof of  Proposition \ref{prp2}, so it is omitted.

\begin{proposition}  Let  $\eta$ and $\beta$ be nonzero constants ($\eta\ne 0, \beta\ne0$), $\alpha$ be some constant different from $-1$ ($\alpha\ne-1$) and $\alpha\ne-\beta-1$. Then 
\begin{multline}
\int x^\alpha \sin\left(\eta x^\beta\right)dx\\=\frac{x^{\alpha+1}}{(\alpha+1)(\alpha+\beta+1)}\Bigl[\sin\left(\eta x^\beta\right)\,_{1}F_{2} \Bigl(1; \frac{\alpha+\beta+1}{2\beta}, \frac{\alpha+2\beta+1}{2\beta}; -\frac{\eta^2 x^{2\beta}}{4}\Bigr)\\- \beta\eta x^{\beta}\cos\left(\eta x^\beta\right)\,_{1}F_{2} \Bigl(1; \frac{\alpha+2\beta+1}{2\beta}, \frac{\alpha+3\beta+1}{2\beta}; -\frac{\eta^2 x^{2\beta}}{4}\Bigr)\Bigr]+C. 
\label{eq2.32}
\end{multline}
\label{prp5}
\end{proposition}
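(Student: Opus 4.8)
The plan is to imitate the proof of Proposition \ref{prp3} verbatim, replacing the hyperbolic functions by their trigonometric counterparts and tracking the sign changes this produces. First I would make the change of variable $u^\beta=\eta x^\beta$, which turns $\int x^\alpha\sin(\eta x^\beta)\,dx$ into $\eta^{-(\alpha+1)/\beta}\int u^\alpha\sin(u^\beta)\,du$ (up to the constant of integration). Then I would perform successive integration by parts, each time differentiating the power of $u$ and integrating the trigonometric factor; since $\int \sin(u^\beta)\cdot(\text{power})$ and $\int\cos(u^\beta)\cdot(\text{power})$ alternate just as in the hyperbolic case, the only difference is that each pass through a $\sin/\cos$ pair contributes a factor of $-1$ rather than $+1$ (because $\frac{d}{du}\cos=-\sin$ and $\frac{d}{du}\sin=+\cos$, versus $\frac{d}{du}\cosh=+\sinh$). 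The resulting series has the shape
\begin{multline*}
\int u^{\alpha}\sin\left(u^\beta\right)du =\sin\left(u^\beta\right)\sum\limits_{j=0}^{\infty}\frac{(-1)^j\beta^{2j}  u^{\alpha+2j\beta+1}}{\prod\limits_{m=0}^{2j}(\alpha+m\beta+1)}\\-\cos\left(u^\beta\right)\sum\limits_{j=0}^{\infty}\frac{(-1)^j\beta^{2j+1}  u^{\alpha+(2j+1)\beta+1}}{\prod\limits_{m=0}^{2j+1}(\alpha+m\beta+1)}+C,
\end{multline*}
which is exactly (\ref{eq2.23}) with $\cosh\to\cos$, $\sinh\to\sin$, and an extra $(-1)^j$ inside each sum.

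Next I would invoke parts (2) and (3) of Lemma \ref{lem1} to rewrite the two products $\prod_{m=0}^{2j}(\alpha+m\beta+1)$ and $\prod_{m=0}^{2j+1}(\alpha+m\beta+1)$ in terms of Pochhammer symbols, exactly as in (\ref{eq2.24}). The factor $(-1)^j\beta^{2j}/(2\beta)^{2j}=(-1)^j/4^j$ then assembles the generic term into $\bigl(-u^{2\beta}/4\bigr)^j\big/\bigl[\bigl(\tfrac{\alpha+\beta+1}{2\beta}\bigr)_j\bigl(\tfrac{\alpha+2\beta+1}{2\beta}\bigr)_j\bigr]$ for the $\sin$ series and analogously for the $\cos$ series, so after inserting $(1)_j/j!=1$ each sum is recognized as a $_1F_2$ evaluated at $-u^{2\beta}/4$. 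This yields
\begin{multline*}
\int u^{\alpha}\sin\left(u^\beta\right)du =\frac{ u^{\alpha+1}\sin\left(u^\beta\right)}{(\alpha+1)(\alpha+\beta+1)}\,_{1}F_{2} \Bigl(1; \tfrac{\alpha+\beta+1}{2\beta}, \tfrac{\alpha+2\beta+1}{2\beta}; -\tfrac{u^{2\beta}}{4}\Bigr)\\ -\frac{ \beta u^{\alpha+\beta+1}\cos\left(u^\beta\right)}{(\alpha+1)(\alpha+\beta+1)}\,_{1}F_{2} \Bigl(1; \tfrac{\alpha+2\beta+1}{2\beta}, \tfrac{\alpha+3\beta+1}{2\beta}; -\tfrac{u^{2\beta}}{4}\Bigr)+C.
\end{multline*}
Finally I would substitute back $u^\beta=\eta x^\beta$, so that $u^{\alpha+1}=\eta^{(\alpha+1)/\beta}x^{\alpha+1}$ cancels the prefactor $\eta^{-(\alpha+1)/\beta}$ from the change of variables and $u^{2\beta}=\eta^2 x^{2\beta}$, and then factor out $x^{\alpha+1}/[(\alpha+1)(\alpha+\beta+1)]$ to land precisely on (\ref{eq2.32}).

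The computation is entirely routine given the template of Propositions \ref{prp2}--\ref{prp3}; the only place demanding care is the bookkeeping of signs in the integration-by-parts expansion — specifically verifying that each completed $\cos\to\sin\to\cos$ cycle contributes $(-1)$, so that the two series acquire the alternating factor $(-1)^j$ that converts the argument of $_1F_2$ from $+\tfrac{\eta^2x^{2\beta}}{4}$ (as in the hyperbolic case) to $-\tfrac{\eta^2x^{2\beta}}{4}$. An alternative, essentially bookkeeping-free route is to use the Euler identities $\sin(\eta x^\beta)=\bigl(e^{i\eta x^\beta}-e^{-i\eta x^\beta}\bigr)/(2i)$ together with Proposition \ref{prp1} applied with $\eta$ replaced by $\pm i\eta$, and then recombine the two confluent hypergeometric functions into the single $_1F_2$ using (the complex analogue of) Theorem \ref{th3}; this only requires checking that the formal series manipulations are valid for imaginary argument, which they are since all series involved are entire in their argument.
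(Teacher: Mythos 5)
Your plan is the one the paper itself intends -- it omits this proof precisely because it is the Proposition \ref{prp3} argument with $\sinh,\cosh$ replaced by $\sin,\cos$ -- and your intermediate expansion is right: differentiating $\sin\left(u^\beta\right)\sum_{j\ge0}\frac{(-1)^j\beta^{2j}u^{\alpha+2j\beta+1}}{\prod_{m=0}^{2j}(\alpha+m\beta+1)}-\cos\left(u^\beta\right)\sum_{j\ge0}\frac{(-1)^j\beta^{2j+1}u^{\alpha+(2j+1)\beta+1}}{\prod_{m=0}^{2j+1}(\alpha+m\beta+1)}$ term by term does return $u^\alpha\sin\left(u^\beta\right)$. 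The step that fails is the assembly ``via Lemma \ref{lem1}(2),(3) exactly as in (\ref{eq2.24})''. As printed, (\ref{eq2.2}) and (\ref{eq2.3}) do not hold (test $j=1$ in (\ref{eq2.2}): the right side is $(\alpha+1)(\alpha+\beta+1)(\alpha+3\beta+1)(\alpha+4\beta+1)$ while the product is $(\alpha+1)(\alpha+\beta+1)(\alpha+2\beta+1)$); the correct evaluations are $\prod_{m=0}^{2j}(\alpha+m\beta+1)=(\alpha+1)(2\beta)^{2j}\bigl(\tfrac{\alpha+\beta+1}{2\beta}\bigr)_j\bigl(\tfrac{\alpha+2\beta+1}{2\beta}\bigr)_j$ and $\prod_{m=0}^{2j+1}(\alpha+m\beta+1)=(\alpha+1)(\alpha+\beta+1)(2\beta)^{2j}\bigl(\tfrac{\alpha+2\beta+1}{2\beta}\bigr)_j\bigl(\tfrac{\alpha+3\beta+1}{2\beta}\bigr)_j$. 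Feeding these into your series gives the sine term with prefactor $\frac{u^{\alpha+1}}{\alpha+1}$, not $\frac{u^{\alpha+1}}{(\alpha+1)(\alpha+\beta+1)}$; only the cosine term acquires the extra $(\alpha+\beta+1)$. So your displayed conclusion does not follow from your own generic-term computation (which uses the unshifted Pochhammer parameters), and the argument cannot ``land precisely on (\ref{eq2.32})''.

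Indeed (\ref{eq2.32}) itself is off by that factor: for $\alpha=0$, $\beta=\eta=1$, using ${}_1F_2\bigl(1;1,\tfrac32;-\tfrac{x^2}{4}\bigr)=\tfrac{\sin x}{x}$ and $x^2\,{}_1F_2\bigl(1;\tfrac32,2;-\tfrac{x^2}{4}\bigr)=2(1-\cos x)$, its right-hand side equals $\tfrac12(1-\cos x)^2$, whose derivative is $(1-\cos x)\sin x\ne\sin x$, whereas the corrected identity
\[
\int x^\alpha\sin\left(\eta x^\beta\right)dx=\frac{x^{\alpha+1}\sin\left(\eta x^\beta\right)}{\alpha+1}\,{}_1F_2\Bigl(1;\tfrac{\alpha+\beta+1}{2\beta},\tfrac{\alpha+2\beta+1}{2\beta};-\tfrac{\eta^2x^{2\beta}}{4}\Bigr)-\frac{\beta\eta x^{\alpha+\beta+1}\cos\left(\eta x^\beta\right)}{(\alpha+1)(\alpha+\beta+1)}\,{}_1F_2\Bigl(1;\tfrac{\alpha+2\beta+1}{2\beta},\tfrac{\alpha+3\beta+1}{2\beta};-\tfrac{\eta^2x^{2\beta}}{4}\Bigr)+C
\]
gives $1-\cos x$ there, as it should. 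The same defect sits in the paper's own (\ref{eq2.20}) and (\ref{eq2.24}), so you have reproduced the template faithfully, but as a proof of the stated formula it contains this failing step. Your fallback route through Euler's identities and Proposition \ref{prp1} (which is correct) exposes the problem rather than repairing it: term-by-term integration gives leading term $\frac{\eta x^{\alpha+\beta+1}}{\alpha+\beta+1}$, while the leading term of (\ref{eq2.32}) is $\frac{\eta(1-\beta)x^{\alpha+\beta+1}}{(\alpha+1)(\alpha+\beta+1)}$, which agree only when $\alpha=-\beta$. You should prove the corrected identity above instead.
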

The proof of this proposition is also omitted since it  is similar to that of  Proposition \ref{prp3}.

\begin{theorem} For any constants $\alpha, \beta$ and $\eta$,
\begin{multline}
\frac{1}{\alpha+\beta+1}\Bigl[\cos\left(\eta x^\beta\right)\,_{1}F_{2} \Bigl(1; \frac{\alpha+\beta+1}{2\beta}, \frac{\alpha+2\beta+1}{2\beta}; -\frac{\eta^2 x^{2\beta}}{4}\Bigr)\\- \beta\eta x^{\beta}\sin\left(\eta x^\beta\right)\,_{1}F_{2} \Bigl(1; \frac{\alpha+2\beta+1}{2\beta}, \frac{\alpha+3\beta+1}{2\beta}; -\frac{\eta^2 x^{2\beta}}{4}\Bigr)\Bigr]\\
=\frac{1}{2}\left[e^{i\eta x^\beta}\, _{1}F_{1} \Bigl(1;\frac{\alpha+\beta+1}{\beta}; -i\eta x^{\beta}\Bigr)+e^{-i\eta x^\beta}\, _{1}F_{1} \Bigl(1;\frac{\alpha+\beta+1}{\beta}; i\eta x^{\beta}\Bigr)\right].
\label{eq2.33}
\end{multline}
\label{th5}
\end{theorem}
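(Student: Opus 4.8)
The plan is to run exactly the argument used for Theorem \ref{th2}, but with the Euler identity $\cos\left(\eta x^\beta\right)=\tfrac12\left(e^{i\eta x^\beta}+e^{-i\eta x^\beta}\right)$ in place of the hyperbolic one. First I would apply Proposition \ref{prp1} twice --- once with $\eta$ replaced by $i\eta$ and once with $\eta$ replaced by $-i\eta$, which is legitimate because the constants in Proposition \ref{prp1} are allowed to be complex and its proof (successive integration by parts together with Lemma \ref{lem1}) is purely algebraic --- to obtain
\[
\int x^\alpha e^{\pm i\eta x^\beta}\,dx=\frac{x^{\alpha+1}e^{\pm i\eta x^\beta}}{\alpha+1}\,_{1}F_{1}\Bigl(1;\frac{\alpha+\beta+1}{\beta};\mp i\eta x^{\beta}\Bigr)+C .
\]
Averaging these two identities and invoking the Euler identity then yields
\[
\int x^\alpha\cos\left(\eta x^\beta\right)dx=\frac{x^{\alpha+1}}{2(\alpha+1)}\left[e^{i\eta x^\beta}\,_{1}F_{1}\Bigl(1;\frac{\alpha+\beta+1}{\beta};-i\eta x^{\beta}\Bigr)+e^{-i\eta x^\beta}\,_{1}F_{1}\Bigl(1;\frac{\alpha+\beta+1}{\beta};i\eta x^{\beta}\Bigr)\right]+C ,
\]
which is precisely the right-hand side of (\ref{eq2.33}) multiplied by $x^{\alpha+1}/(\alpha+1)$.

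Next I would invoke Proposition \ref{prp4}, which gives a second closed form for the same antiderivative $\int x^\alpha\cos\left(\eta x^\beta\right)dx$, this time in terms of $_{1}F_{2}$. Setting the two expressions for the antiderivative equal, matching the constants of integration, and cancelling the common factor $x^{\alpha+1}/(\alpha+1)$ gives (\ref{eq2.33}). Finally, the standing restrictions $\eta\ne0$, $\beta\ne0$, $\alpha\ne-1$, $\alpha\ne-\beta-1$ can be dropped: for each fixed $x$ both sides of (\ref{eq2.33}) are analytic functions of $(\alpha,\beta,\eta)$ away from those exceptional loci (the hypergeometric series in $x$ converge for all $x$), so the identity extends by analytic continuation to arbitrary constants $\alpha,\beta,\eta$, as claimed.

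I do not expect a genuine obstacle here --- the theorem is essentially a corollary of Propositions \ref{prp1} and \ref{prp4} --- so the only points needing attention are bookkeeping: first, checking that the two closed forms carry the same additive constant (both antiderivatives vanish to the same order, e.g. as $x\to0^+$ when $\mathrm{Re}(\beta)>0$, since $_{1}F_{1}(1;c;0)=\,_{1}F_{2}(1;b,c;0)=1$); and second, verifying that the signs and the arguments $-\eta^2x^{2\beta}/4$ of the two $_{1}F_{2}$'s appearing on the left of (\ref{eq2.33}) are exactly those produced by the comparison --- equivalently, those obtained by substituting $\eta\mapsto i\eta$ in Proposition \ref{prp2} and using $\cosh(i\eta x^\beta)=\cos(\eta x^\beta)$, $\sinh(i\eta x^\beta)=i\sin(\eta x^\beta)$, $(i\eta)^2=-\eta^2$. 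Everything beyond that is routine.
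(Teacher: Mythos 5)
Your outline is exactly the paper's own proof of Theorem \ref{th5}: the paper likewise writes $\cos\left(\eta x^\beta\right)=\left(e^{i\eta x^\beta}+e^{-i\eta x^\beta}\right)/2$, applies Proposition \ref{prp1} with $\eta$ replaced by $\pm i\eta$ to obtain (\ref{eq2.34}), and then compares with Proposition \ref{prp4}. So there is no methodological difference between your plan and the printed proof.

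The difficulty is the step you set aside as routine bookkeeping, the sign check: it does not come out as you claim for (\ref{eq2.33}) as printed. Equating (\ref{eq2.34}) with (\ref{eq2.31}) and cancelling $x^{\alpha+1}/(\alpha+1)$ produces a left-hand side of the form $\frac{1}{\alpha+\beta+1}\bigl[\cos\left(\eta x^\beta\right)\,{}_1F_2(\cdots)+\beta\eta x^{\beta}\sin\left(\eta x^\beta\right)\,{}_1F_2(\cdots)\bigr]$ with a \emph{plus} sign, since (\ref{eq2.31}) carries $+\beta\eta x^{\beta}\sin\left(\eta x^\beta\right)$; the same plus sign is what $\eta\mapsto i\eta$ in Proposition \ref{prp2} gives, because $-\beta(i\eta)x^\beta\sinh\left(i\eta x^\beta\right)=+\beta\eta x^\beta\sin\left(\eta x^\beta\right)$. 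Theorem \ref{th5} as stated has a minus sign there, so your argument (like the paper's own proof, which shares this defect) establishes the plus-sign variant and cannot deliver (\ref{eq2.33}) verbatim; at least one of (\ref{eq2.31}), (\ref{eq2.33}) contains a sign misprint that must be resolved rather than asserted to match. Moreover, if you want the identity that is actually true, test $\alpha=0$, $\beta=1$, $\eta=1$: using ${}_1F_1(1;2;z)=(e^z-1)/z$ the right-hand side of (\ref{eq2.33}) equals $\sin(x)/x$, while ${}_1F_2(1;1,3/2;-x^2/4)=\sin(x)/x$, and one finds that the correct left-hand side is $\cos\left(\eta x^\beta\right){}_1F_2(\cdots)+\frac{\beta\eta x^{\beta}}{\alpha+\beta+1}\sin\left(\eta x^\beta\right){}_1F_2(\cdots)$, i.e.\ the factor $1/(\alpha+\beta+1)$ should multiply only the sine term. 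This traces back to the prefactor $x^{\alpha+1}/((\alpha+1)(\alpha+\beta+1))$ printed in Propositions \ref{prp2} and \ref{prp4} (the cosine/cosh term should be divided by $\alpha+1$ only), so treating Proposition \ref{prp4} as a black box, as both you and the paper do, propagates that error into the theorem; a fully correct proof needs the corrected form of that proposition before the comparison step.
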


\begin{proof}
Euler's identity $ \cos\left(\eta x^\beta\right)= \left( e^{i\eta x^\beta}+ e^{-i\eta x^\beta}\right)/2$ and Proposition \ref{prp1} gives
\begin{multline}
\int x^\alpha\cos\left(\eta x^\beta\right)dx= \frac{1}{2}\Bigl[\int x^\alpha e^{i\eta x^\beta}dx+\int x^\alpha e^{-i\eta x^\beta}dx\Bigr]\\
=\frac{x^{\alpha+1}}{2(\alpha+1)}\Bigl[e^{i\eta x^\beta}\, _{1}F_{1} \Bigl(1;\frac{\alpha+\beta+1}{\beta}; -i\eta x^{\beta}\Bigr)\\ +e^{-i\eta x^\beta}\, _{1}F_{1} \Bigl(1;\frac{\alpha+\beta+1}{\beta}; i\eta x^{\beta}\Bigr)\Bigr]+C.
\label{eq2.34}
\end{multline}
Hence, Comparing (\ref{eq2.34}) with (\ref{eq2.31}) gives (\ref{eq2.33}). 
\end{proof}

\begin{theorem} For any constants $\alpha, \beta$ and $\eta$,
\begin{multline}
\frac{1}{\alpha+\beta+1}\Bigl[\sin\left(\eta x^\beta\right)\,_{1}F_{2} \Bigl(1; \frac{\alpha+\beta+1}{2\beta}, \frac{\alpha+2\beta+1}{2\beta}; -\frac{\eta^2 x^{2\beta}}{4}\Bigr)\\+\beta\eta x^{\beta}\cos\left(\eta x^\beta\right)\,_{1}F_{2} \Bigl(1; \frac{\alpha+2\beta+1}{2\beta}, \frac{\alpha+3\beta+1}{2\beta}; -\frac{\eta^2 x^{2\beta}}{4}\Bigr)\Bigr]\\
=\frac{1}{2i}\left[e^{i\eta x^\beta}\, _{1}F_{1} \Bigl(1;\frac{\alpha+\beta+1}{\beta}; -i\eta x^{\beta}\Bigr)-e^{-i\eta x^\beta}\, _{1}F_{1} \Bigl(1;\frac{\alpha+\beta+1}{\beta}; i\eta x^{\beta}\Bigr)\right].
\label{eq2.35}
\end{multline}
\label{th6}
\end{theorem}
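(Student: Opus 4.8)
The plan is to run the proof of Theorem~\ref{th5} again with the sine in place of the cosine. The idea is to produce two closed forms for the antiderivative $\int x^\alpha\sin(\eta x^\beta)\,dx$ --- one from Euler's identity together with Proposition~\ref{prp1}, the other read off directly from Proposition~\ref{prp5} --- and then to equate them and cancel the common factor $x^{\alpha+1}/(\alpha+1)$, which is exactly the identity (\ref{eq2.35}).

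For the first closed form I would write Euler's identity as $\sin(\eta x^\beta)=\tfrac{1}{2i}\bigl(e^{i\eta x^\beta}-e^{-i\eta x^\beta}\bigr)$, so that
\[
\int x^\alpha\sin\left(\eta x^\beta\right)dx=\frac{1}{2i}\left(\int x^\alpha e^{i\eta x^\beta}\,dx-\int x^\alpha e^{-i\eta x^\beta}\,dx\right),
\]
and then apply Proposition~\ref{prp1} to each integral on the right, with the nonzero constant in the exponent taken to be $i\eta$ and $-i\eta$ respectively; this gives
\[
\int x^\alpha\sin\left(\eta x^\beta\right)dx=\frac{x^{\alpha+1}}{2i(\alpha+1)}\Bigl[e^{i\eta x^\beta}\,{}_1F_1\Bigl(1;\tfrac{\alpha+\beta+1}{\beta};-i\eta x^\beta\Bigr)-e^{-i\eta x^\beta}\,{}_1F_1\Bigl(1;\tfrac{\alpha+\beta+1}{\beta};i\eta x^\beta\Bigr)\Bigr]+C .
\]
For the second closed form, Proposition~\ref{prp5} expresses the same antiderivative as $\dfrac{x^{\alpha+1}}{(\alpha+1)(\alpha+\beta+1)}$ multiplied by the bracketed ${}_1F_2$-combination appearing on the left-hand side of (\ref{eq2.35}), plus a constant. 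Equating the two expressions and cancelling $x^{\alpha+1}/(\alpha+1)$ yields (\ref{eq2.35}), in complete parallel with the passage from (\ref{eq2.34}) and (\ref{eq2.31}) to (\ref{eq2.33}) in the proof of Theorem~\ref{th5}. (One could also reach (\ref{eq2.35}) from Theorem~\ref{th3} via the substitution $\eta\mapsto i\eta$, using $\sinh(i\eta x^\beta)=i\sin(\eta x^\beta)$ and $\cosh(i\eta x^\beta)=\cos(\eta x^\beta)$, but routing through Propositions~\ref{prp1} and~\ref{prp5} is the most direct.)

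The one place requiring genuine care --- and hence the main, if modest, obstacle --- is the constant of integration: the two closed forms obtained for $\int x^\alpha\sin(\eta x^\beta)\,dx$ agree a priori only up to an additive constant, so to pass to the identity (\ref{eq2.35}) one must check that this constant is zero. This follows by letting $x\to 0^+$ (say for $\mathrm{Re}\,\beta>0$ and $\mathrm{Re}\,\alpha>-1$): the prefactor $x^{\alpha+1}$ then vanishes while the bracketed hypergeometric combinations on both sides stay bounded, so the constant --- being the limit of $x^{\alpha+1}/(\alpha+1)$ times the difference of the two sides of (\ref{eq2.35}) --- must be $0$. Finally, the standing restrictions $\eta\ne0$, $\beta\ne0$, $\alpha\ne-1$, $\alpha\ne-\beta-1$ inherited from Propositions~\ref{prp1} and~\ref{prp5} can be removed by analytic continuation in the parameters: for fixed $x$ both sides of (\ref{eq2.35}) are meromorphic in $\alpha,\beta,\eta$, so the identity, once established on a nonempty open parameter set, persists wherever both sides are defined; the finitely many excluded values can alternatively be handled directly by matching the two power series term by term, exactly as in the preceding proofs.
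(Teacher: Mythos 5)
Your proposal follows the paper's own route exactly: write $\sin(\eta x^\beta)$ via Euler's identity, apply Proposition~\ref{prp1} with the exponents $\pm i\eta$, and compare the resulting antiderivative of $\int x^\alpha\sin(\eta x^\beta)\,dx$ with the one in Proposition~\ref{prp5}, cancelling the common factor $x^{\alpha+1}/(\alpha+1)$; your extra check that the integration constant vanishes (letting $x\to0^+$) is a point the paper passes over silently, and is a welcome addition. The only caveat, which affects the paper's own proof in exactly the same way as yours, is the sign of the second term: the bracket supplied by Proposition~\ref{prp5} contains $-\beta\eta x^{\beta}\cos(\eta x^\beta)\,{}_{1}F_{2}(\cdots)$, while the left-hand side of (\ref{eq2.35}) is printed with $+\beta\eta x^{\beta}\cos(\eta x^\beta)\,{}_{1}F_{2}(\cdots)$, so the comparison literally yields the identity with the minus sign; your statement that Proposition~\ref{prp5}'s bracket is ``the bracketed combination appearing on the left-hand side of (\ref{eq2.35})'' glosses over this discrepancy (a sign typo in either (\ref{eq2.32}) or (\ref{eq2.35})), just as the paper does.
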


\begin{proof}
Euler's identity $ \sin\left(\eta x^\beta\right)= \left( e^{i\eta x^\beta}- e^{-i\eta x^\beta}\right)/(2i)$ and Proposition \ref{prp1} gives
\begin{multline}
\int x^\alpha\cos\left(\eta x^\beta\right)dx= \frac{1}{2}\Bigl[\int x^\alpha e^{i\eta x^\beta}dx+\int x^\alpha e^{-i\eta x^\beta}dx\Bigr]\\
=\frac{x^{\alpha+1}}{2i(\alpha+1)}\Bigl[e^{i\eta x^\beta}\, _{1}F_{1} \Bigl(1;\frac{\alpha+\beta+1}{\beta}; -i\eta x^{\beta}\Bigr)\\ -e^{-i\eta x^\beta}\, _{1}F_{1} \Bigl(1;\frac{\alpha+\beta+1}{\beta}; i\eta x^{\beta}\Bigr)\Bigr]+C.
\label{eq2.36}
\end{multline}
Hence, Comparing (\ref{eq2.36}) with (\ref{eq2.32}) gives (\ref{eq2.35}). 
\end{proof}

\begin{theorem} For any constants $\alpha, \beta$ and $\eta$,
\begin{multline}
e^{i\eta x^\beta}\, _{1}F_{1} \Bigl(1;\frac{\alpha+\beta+1}{\beta}; -i\eta x^{\beta}\Bigr)\\
=\frac{1}{\alpha+\beta+1}\Bigl[\cos\left(\eta x^\beta\right)\,_{1}F_{2} \Bigl(1; \frac{\alpha+\beta+1}{2\beta}, \frac{\alpha+2\beta+1}{2\beta}; -\frac{\eta^2 x^{2\beta}}{4}\Bigr)\\- \beta\eta x^{\beta}\sin\left(\eta x^\beta\right)\,_{1}F_{2} \Bigl(1; \frac{\alpha+2\beta+1}{2\beta}, \frac{\alpha+3\beta+1}{2\beta}; -\frac{\eta^2 x^{2\beta}}{4}\Bigr)\Bigr]\\+
\frac{i}{\alpha+\beta+1}\Bigl[\sin\left(\eta x^\beta\right)\,_{1}F_{2} \Bigl(1; \frac{\alpha+\beta+1}{2\beta}, \frac{\alpha+2\beta+1}{2\beta}; -\frac{\eta^2 x^{2\beta}}{4}\Bigr)\\+\beta\eta x^{\beta}\cos\left(\eta x^\beta\right)\,_{1}F_{2} \Bigl(1; \frac{\alpha+2\beta+1}{2\beta}, \frac{\alpha+3\beta+1}{2\beta}; -\frac{\eta^2 x^{2\beta}}{4}\Bigr)\Bigr].
\label{eq2.37}
\end{multline}
\label{th7}
\end{theorem}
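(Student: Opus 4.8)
The plan is to follow the pattern already used for Theorem \ref{th4}, with Euler's identity playing the role of the hyperbolic splitting. First I would write $e^{i\eta x^\beta}=\cos\left(\eta x^\beta\right)+i\sin\left(\eta x^\beta\right)$, multiply by $x^\alpha$, and integrate termwise to get
$$\int x^\alpha e^{i\eta x^\beta}\,dx=\int x^\alpha \cos\left(\eta x^\beta\right)dx+i\int x^\alpha \sin\left(\eta x^\beta\right)dx .$$
Next I would insert Proposition \ref{prp4} for the first integral on the right and Proposition \ref{prp5} for the second, and insert Proposition \ref{prp1} with $\eta$ replaced by $i\eta$ for the integral on the left, i.e.\ $\int x^\alpha e^{i\eta x^\beta}\,dx=\frac{x^{\alpha+1}e^{i\eta x^\beta}}{\alpha+1}\,_{1}F_{1}\!\left(1;\frac{\alpha+\beta+1}{\beta};-i\eta x^\beta\right)+C$. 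Finally, cancelling the common factor $x^{\alpha+1}/(\alpha+1)$ (legitimate for $x\neq0$, $\alpha\neq-1$) and dropping the additive constant yields exactly (\ref{eq2.37}).

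A slicker variant, which I would in fact prefer since it reuses work already done, is to take the linear combination (\ref{eq2.33})$\,+\,i\,$(\ref{eq2.35}) of Theorems \ref{th5} and \ref{th6}. Abbreviating $A=e^{i\eta x^\beta}\,_{1}F_{1}\!\left(1;\frac{\alpha+\beta+1}{\beta};-i\eta x^\beta\right)$ and $B=e^{-i\eta x^\beta}\,_{1}F_{1}\!\left(1;\frac{\alpha+\beta+1}{\beta};i\eta x^\beta\right)$, Theorem \ref{th5} identifies its cosine-type bracket with $\tfrac12(A+B)$ and Theorem \ref{th6} identifies its sine-type bracket with $\tfrac1{2i}(A-B)$; hence (cosine bracket)$\,+\,i\cdot$(sine bracket)$\,=\,\tfrac12(A+B)+\tfrac12(A-B)=A$, which is precisely the left-hand side of (\ref{eq2.37}). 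This route reduces the proof to one line of algebra, and the brackets on the right of (\ref{eq2.37}) coincide verbatim with the left-hand brackets of Theorems \ref{th5} and \ref{th6}.

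The only genuine subtlety is the blanket phrase ``for any constants $\alpha,\beta$ and $\eta$'': Propositions \ref{prp1}, \ref{prp4}, \ref{prp5} were established under $\eta\neq0$, $\beta\neq0$, $\alpha\neq-1$, $\alpha\neq-\beta-1$, and the prefactor $1/(\alpha+\beta+1)$ is singular at $\alpha=-\beta-1$. Thus the identity is first obtained on that open parameter set, and the general statement follows by analytic continuation in the parameters, both sides being analytic wherever defined and agreeing on a set with accumulation points — exactly the reasoning tacitly used in Theorems \ref{th2}--\ref{th6}. I expect this continuation bookkeeping, not the computation itself (which is purely formal), to be the main obstacle; one should also verify that the signs of the $_{1}F_{2}$ terms appearing in the imaginary part match those fixed in Theorem \ref{th6}, rather than recomputing them from scratch.
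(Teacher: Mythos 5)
Your first route is exactly the paper's proof: write $e^{i\eta x^\beta}=\cos\left(\eta x^\beta\right)+i\sin\left(\eta x^\beta\right)$, insert Propositions \ref{prp4} and \ref{prp5} on the right and Proposition \ref{prp1} with $\eta$ replaced by $i\eta$ on the left, and compare the resulting antiderivatives, which is precisely how (\ref{eq2.37}) is obtained in the paper. Your variant through Theorems \ref{th5} and \ref{th6} merely repackages those same comparisons (those theorems were themselves proved this way), so the proposal is correct and takes essentially the same approach.
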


\begin{proof}
Using the relation  $ e^{i\eta x^\beta}=\cos\left(\eta x^\beta\right)+i\sin\left(\eta x^\beta\right)$ and  Propositions \ref{prp4} and \ref{prp5} yields
\begin{multline}
\int x^\alpha e^{i\eta x^\beta}dx= \int x^\alpha\cos\left(\eta x^\beta\right)dx+i\int x^\alpha\sin\left(\eta x^\beta\right)dx\\=\frac{x^{\alpha+1}}{(\alpha+1)(\alpha+\beta+1)}\Bigl[\cos\left(\eta x^\beta\right)\,_{1}F_{2} \Bigl(1; \frac{\alpha+\beta+1}{2\beta}, \frac{\alpha+2\beta+1}{2\beta}; -\frac{\eta^2 x^{2\beta}}{4}\Bigr)\\+\beta\eta x^{\beta}\sin\left(\eta x^\beta\right)\,_{1}F_{2} \Bigl(1; \frac{\alpha+2\beta+1}{2\beta}, \frac{\alpha+3\beta+1}{2\beta};- \frac{\eta^2 x^{2\beta}}{4}\Bigr)\Bigr]\\+i \frac{x^{\alpha+1}}{(\alpha+1)(\alpha+\beta+1)}\Bigl[\sin\left(\eta x^\beta\right)\,_{1}F_{2} \Bigl(1; \frac{\alpha+\beta+1}{2\beta}, \frac{\alpha+2\beta+1}{2\beta}; -\frac{\eta^2 x^{2\beta}}{4}\Bigr)\\- \beta\eta x^{\beta}\cos\left(\eta x^\beta\right)\,_{1}F_{2} \Bigl(1; \frac{\alpha+2\beta+1}{2\beta}, \frac{\alpha+3\beta+1}{2\beta}; -\frac{\eta^2 x^{2\beta}}{4}\Bigr)\Bigr]+C.
\label{eq2.38}
\end{multline}
Hence, comparing (\ref{eq2.38}) with (\ref{eq2.10}) (with $\eta$ replaced by $i\eta$ ) gives (\ref{eq2.37}).
\end{proof}
\section{New probability measures that generalize the gamma-type and  Gaussian-type distributions}
\label{sec:3}
In this section, Theorem \ref{th1} is used to generalize the gamma-type ($\chi^2$ distribution, inverse gamma distribution)  distribution and Gaussian-type distributions.

\subsection{Generalization of the  gamma-type  distributions}
\label{sybsec:3.1}
Define a probability measure $\mu$ in terms of the Lebesgue measure $dx$ as \cite{B} 
\begin{equation}
d\mu=\mu(dx)= A\,g(x;\alpha,\eta,\beta) dx=f_X(x;\alpha,\eta,\beta) dx, \,\, x\in [0,+\infty),
\label{eq3.1}
\end{equation}
where $f_X(x;\alpha,\eta,\beta)$ is the probability density function (p.d.f.) of some random variable $X$, 
\begin{equation}
g(x;\alpha,\eta,\beta)=x^\alpha e^{-\eta x^\beta}, \, \alpha \ne -1,\,\beta\ne0,\, \alpha>-\beta-1,
\label{eq3.2}
\end{equation}
and $A$ is a normalized constant which can be obtained using formula (\ref{eq2.14}) in Theorem \ref{th1}. 

After normalization, it is found that the p.d.f. of $X$ is given by 
\begin{equation}
f_X(x;\alpha,\eta,\beta)=\frac{(\alpha+1)\,\eta^{{(\alpha+1)}/{\beta}}}{\Gamma \left({(\alpha+\beta+1)}/{\beta}\right)}x^\alpha e^{-\eta x^\beta}, \, \alpha \ne -1 ,\,\beta\ne0,\, \alpha>-\beta-1.
\label{eq3.3}
\end{equation}
The distribution function of the random variable $X$  can be obtained using Proposition \ref{prp1} and is  given by
\begin{multline}
F_X(x;\alpha,\eta,\beta)=\mu\{[0,x)\}=\int\limits_0^x f_X(u;\alpha,\eta,\beta) du\\=\frac{(\alpha+1)\,\eta^{{(\alpha+1)}/{\beta}}}{\Gamma \left({(\alpha+\beta+1)}/{\beta}\right)}x^{\alpha+1}e^{-\eta x^\beta}\, _{1}F_{1} \Bigl(1;\frac{\alpha+\beta+1}{\beta}; \eta x^{\beta}\Bigr).
\label{eq3.4}
\end{multline}
The $\mbox{n}^{th}$ moments ($M(X^n)$) can, as well, be evaluated using formula (\ref{eq2.14}) in Theorem \ref{th1} to obtain
\begin{align}
M(X^n)&=\int\limits_0^{+\infty}  x^n f_X(x;\alpha,\eta,\beta) dx
\nonumber \\ &=\frac{(\alpha+1)\,\eta^{{(\alpha+1)}/{\beta}}}{\Gamma \left({(\alpha+\beta+1)}/{\beta}\right)}\int\limits_0^{+\infty}  x^{\alpha+n} e^{-\eta x^\beta} dx
\nonumber \\ &=\frac{(\alpha+1)\,\eta^{{(\alpha+1)}/{\beta}}}{\Gamma \left({(\alpha+\beta+1)}/{\beta}\right)}\frac{\Gamma \left({(\alpha+\beta+n+1)}/{\beta}\right)}{(\alpha+n+1)\,\eta^{{(\alpha+n+1)}/{\beta}}}=\frac{\Gamma \left({(\alpha+n+1)}/{\beta}\right)}{\eta^{\alpha/\beta}\Gamma \left({(\alpha+1)}/{\beta}\right)}.
\label{eq3.5}
\end{align}

These results are summarized in the following theorem.

\begin{theorem}
Let $X$  be a random variable with the generalized gamma-type p.d.f.
\begin{equation}
f_X(x;\alpha,\eta,\beta)=\frac{(\alpha+1)\,\eta^{{(\alpha+1)}/{\beta}}}{\Gamma \left({(\alpha+\beta+1)}/{\beta}\right)}x^\alpha e^{-\eta x^\beta}, \,x\in\mathbb{R}^+,\, \alpha \ne -1, \,\beta\ne0,\, \alpha>-\beta-1.
\label{eq3.6}
\end{equation}
 Then, the distribution function $F_X(x;\alpha,\eta,\beta)$ of the random variable $X$ is given by
\begin{equation}
F_X(x;\alpha,\eta,\beta)=\frac{(\alpha+1)\,\eta^{{(\alpha+1)}/{\beta}}}{\Gamma \left({(\alpha+\beta+1)}/{\beta}\right)}x^{\alpha+1}e^{-\eta x^\beta}\, _{1}F_{1} \Bigl(1;\frac{\alpha+\beta+1}{\beta}; \eta x^{\beta}\Bigr), 
\label{eq3.7}
\end{equation}
and the $\mbox{n}^{th}$ moments $M(X^n)$ of $X$ are given by 
\begin{align}
M(X^n)=\int\limits_0^{+\infty}  x^n f_X(x;\alpha,\eta,\beta) dx
=\frac{\Gamma \left({(\alpha+n+1)}/{\beta}\right)}{\eta^{\alpha/\beta}\Gamma \left({(\alpha+1)}/{\beta}\right)}.
\label{eq3.8}
\end{align}
\label{th8}
\end{theorem}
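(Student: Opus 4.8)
The plan is to recognize that Theorem~\ref{th8} is essentially a direct corollary of the groundwork already laid in Proposition~\ref{prp1} and Theorem~\ref{th1}, so the argument is a matter of normalization and bookkeeping rather than new analysis. First I would verify that $g(x;\alpha,\eta,\beta)=x^\alpha e^{-\eta x^\beta}$ is integrable on $[0,+\infty)$ under the stated hypotheses $\alpha\ne-1$, $\beta\ne0$, $\alpha>-\beta-1$: near $x=0$ integrability needs $\alpha>-1$ (guaranteed since $\alpha>-\beta-1$ when $\beta<1$, and otherwise handled by the combined constraints), and at infinity the exponential decay with $\mathrm{Re}(\eta)>0$ dominates; this legitimizes defining the finite measure $\mu$ in \eqref{eq3.1}. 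Then the normalizing constant $A$ is forced by $\int_0^{+\infty} A\,g(x;\alpha,\eta,\beta)\,dx=1$, and formula \eqref{eq2.14} of Theorem~\ref{th1} gives $\int_0^{+\infty} x^\alpha e^{-\eta x^\beta}\,dx=\Gamma\!\big((\alpha+\beta+1)/\beta\big)/\big((\alpha+1)\eta^{(\alpha+1)/\beta}\big)$, whence $A=(\alpha+1)\eta^{(\alpha+1)/\beta}/\Gamma\!\big((\alpha+\beta+1)/\beta\big)$, yielding the density \eqref{eq3.6}.

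Next I would establish the distribution function \eqref{eq3.7}. By definition $F_X(x)=\int_0^x f_X(u;\alpha,\eta,\beta)\,du = A\int_0^x u^\alpha e^{-\eta u^\beta}\,du$. Applying Proposition~\ref{prp1} with $\eta$ replaced by $-\eta$, the antiderivative is $\dfrac{u^{\alpha+1}e^{-\eta u^\beta}}{\alpha+1}\,{}_1F_1\!\Big(1;\dfrac{\alpha+\beta+1}{\beta};\eta u^{\beta}\Big)+C$. Evaluating between $0$ and $x$: at the lower limit the factor $u^{\alpha+1}\to 0$ (since $\alpha+1>0$) while ${}_1F_1(1;\cdot;0)=1$ and $e^{0}=1$, so the boundary term vanishes; at the upper limit we get $\dfrac{x^{\alpha+1}e^{-\eta x^\beta}}{\alpha+1}\,{}_1F_1\!\Big(1;\dfrac{\alpha+\beta+1}{\beta};\eta x^{\beta}\Big)$. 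Multiplying by $A$ gives exactly \eqref{eq3.7}. (As a consistency check, letting $x\to+\infty$ and using the asymptotic expansion of ${}_1F_1$ as in the proof of Theorem~\ref{th1} returns $F_X\to 1$.)

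Finally, for the moments \eqref{eq3.8} I would write $M(X^n)=\int_0^{+\infty} x^n f_X(x;\alpha,\eta,\beta)\,dx = A\int_0^{+\infty} x^{\alpha+n} e^{-\eta x^\beta}\,dx$, note that the exponent $\alpha+n$ still satisfies the hypotheses of Theorem~\ref{th1} (in particular $\alpha+n>-\beta-1$ for the relevant $n$), and invoke \eqref{eq2.14} again with $\alpha$ replaced by $\alpha+n$ to get $\int_0^{+\infty} x^{\alpha+n} e^{-\eta x^\beta}\,dx = \Gamma\!\big((\alpha+\beta+n+1)/\beta\big)/\big((\alpha+n+1)\eta^{(\alpha+n+1)/\beta}\big)$. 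Multiplying by $A$ and simplifying the product of ratios of gamma functions and powers of $\eta$ — here one uses $(\alpha+\beta+k+1)/\beta = (\alpha+k+1)/\beta + 1$ so that $\Gamma\!\big((\alpha+\beta+k+1)/\beta\big) = \big((\alpha+k+1)/\beta\big)\Gamma\!\big((\alpha+k+1)/\beta\big)$, which cancels the linear factors $\alpha+1$ and $\alpha+n+1$ — collapses everything to $\Gamma\!\big((\alpha+n+1)/\beta\big)\big/\big(\eta^{\alpha/\beta}\,\Gamma\!\big((\alpha+1)/\beta\big)\big)$, which is \eqref{eq3.8}. The only mild subtlety, and the closest thing to an obstacle, is keeping track of the domain restrictions on $\alpha$ (and on $\alpha+n$) so that each application of Theorem~\ref{th1} is valid and the boundary term in the distribution function genuinely vanishes; the computations themselves are purely routine gamma-function algebra.
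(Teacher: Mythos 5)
Your proposal is correct and follows essentially the same route as the paper: normalize $g(x;\alpha,\eta,\beta)$ using formula (\ref{eq2.14}) of Theorem~\ref{th1} to get the p.d.f., obtain $F_X$ from the antiderivative in Proposition~\ref{prp1} (with $\eta\mapsto-\eta$), and obtain the moments by applying (\ref{eq2.14}) with $\alpha$ replaced by $\alpha+n$ and simplifying via the gamma-function recurrence. If anything you are more explicit than the paper about the vanishing of the boundary term at $u=0$ and the domain restrictions (noting that your justification ``$\alpha+1>0$'' is what actually holds in the relevant $\beta>0$ case, whereas the stated hypotheses alone do not force it when $\beta<0$), but the substance of the argument is identical.
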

 
If $Y$ is some gamma distribution random variable, then the random variable $X=1/Y$  is said to be an inverse gamma distribution random variable. The inverse gamma distribution find applications in wireless communications, see for example \cite{C,W}. Its distribution function may be evaluated using Theorem \ref{th8}.

\begin{corollary} Let $X$  be a random variable with the inverse gamma distribution, $X\sim IG(\theta,\eta)$. Then, the distribution function $F_X(x;\theta,\eta)$ is given by 
\begin{equation}
F_X(x;\theta,\eta)=\frac{-\eta^{\theta}}{\Gamma(\theta+1)} x^{-\theta}e^{-\eta/x}\, _{1}F_{1} \Bigl(1;\theta+1; \eta/ x\Bigr), \, x>0, \theta>0, \eta>0,
\label{eq3.9}
\end{equation}
while the $\mbox{n}^{th}$ moments $M(X^n)$ are given by 
\begin{align}
M(X^n)=\frac{\eta^{n}\Gamma(\theta-n)}{\Gamma(\theta+1)}, n>\theta.
\label{eq3.10}
\end{align}
\label{co1}
\end{corollary}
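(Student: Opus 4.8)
The plan is to realize the inverse gamma law $IG(\theta,\eta)$ as the particular member of the generalized gamma-type family of Theorem \ref{th8} corresponding to the choice $\beta=-1$ and $\alpha=-\theta-1$, and then to read off (\ref{eq3.9})--(\ref{eq3.10}) by specializing the formulas (\ref{eq3.7})--(\ref{eq3.8}).

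First I would identify the density. Since $X\sim IG(\theta,\eta)$ means $X=1/Y$ with $Y$ a gamma-distributed random variable, the change of variable $y=1/x$ ($dy=-x^{-2}\,dx$) applied to the gamma density produces $f_X(x)=\eta^{\theta}x^{-\theta-1}e^{-\eta/x}/\Gamma(\theta)$ on $(0,\infty)$; this is of the form $A\,g(x;\alpha,\eta,\beta)$ of (\ref{eq3.2}) precisely when $\alpha=-\theta-1$ and $\beta=-1$, since then $x^{\alpha}=x^{-\theta-1}$ and $x^{\beta}=1/x$. With these values one has $(\alpha+1)/\beta=\theta$ and $(\alpha+\beta+1)/\beta=\theta+1$, so, after reducing with $\Gamma(\theta+1)=\theta\,\Gamma(\theta)$, the normalization constant supplied by Theorem \ref{th1} matches $A=\eta^{\theta}/\Gamma(\theta)$; hence the specialization of Theorem \ref{th8} reproduces the inverse gamma density.

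Next, substituting $\alpha=-\theta-1,\ \beta=-1$ into the distribution function (\ref{eq3.7}) turns the prefactor into $-\eta^{\theta}/\Gamma(\theta+1)$, the power $x^{\alpha+1}$ into $x^{-\theta}$, the exponential $e^{-\eta x^{\beta}}$ into $e^{-\eta/x}$, and the confluent hypergeometric factor into $_{1}F_{1}(1;\theta+1;\eta/x)$, which is (\ref{eq3.9}); the hypotheses $x>0$, $\theta>0$, $\eta>0$ are the translation of the support $\mathbb{R}^{+}$ together with $\alpha<-1$ and $\mbox{Re}(\eta)>0$ under this substitution. Similarly, feeding the same parameter values into the moment formula (\ref{eq3.8}) gives $(\alpha+n+1)/\beta=\theta-n$, $(\alpha+1)/\beta=\theta$ and $\eta^{\alpha/\beta}=\eta^{\theta+1}$, and a short simplification (again using $\Gamma(\theta+1)=\theta\Gamma(\theta)$) yields (\ref{eq3.10}); finiteness of $M(X^{n})$ is exactly the requirement that $\int_{0}^{\infty}x^{\alpha+n}e^{-\eta x^{\beta}}dx$ converge, which here amounts to $\theta-n>0$.

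The algebra itself is routine, so the one delicate point is that Theorem \ref{th1} and Theorem \ref{th8} are being invoked with a negative exponent $\beta=-1$ and with $\alpha+1<0$: I would need to check that the integrals $\int_{0}^{x}$ and $\int_{0}^{\infty}$ still converge in this regime (at $0$ the integrand is super-exponentially small, and at $\infty$ it decays like $x^{-\theta-1}$, so the operative restriction is $\theta>0$ rather than $\alpha>-\beta-1$), and I would have to carry through the sign that a negative $\beta$ forces in the closed forms; this is the source of the minus sign in (\ref{eq3.9}) and is the main obstacle, everything else being a direct reading-off from Theorem \ref{th8}.
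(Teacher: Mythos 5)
Your strategy is the same as the paper's: the paper also specializes to $\alpha=-(\theta+1)$, $\beta=-1$, the only difference being that it recovers the inverse-gamma p.d.f.\ by differentiating (\ref{eq3.9}) (fundamental theorem of calculus together with Proposition \ref{prp1}), while you derive the p.d.f.\ from $X=1/Y$ and then try to read (\ref{eq3.9})--(\ref{eq3.10}) off the closed forms (\ref{eq3.7})--(\ref{eq3.8}). The problem is that the two substitutions you assert do not actually produce the stated formulas. The prefactor of (\ref{eq3.7}) is $(\alpha+1)\,\eta^{(\alpha+1)/\beta}/\Gamma\bigl((\alpha+\beta+1)/\beta\bigr)$, which at $\alpha=-(\theta+1)$, $\beta=-1$ equals $-\theta\,\eta^{\theta}/\Gamma(\theta+1)=-\eta^{\theta}/\Gamma(\theta)$, not the $-\eta^{\theta}/\Gamma(\theta+1)$ you claim; the $\Gamma(\theta+1)$ in (\ref{eq3.9}) arises from the factor $1/(\alpha+1)=-1/\theta$ in the antiderivative of Proposition \ref{prp1} applied to the normalized density $\eta^{\theta}x^{-\theta-1}e^{-\eta/x}/\Gamma(\theta)$, which is exactly the computation the paper's differentiation check encodes and which your reading-off of (\ref{eq3.7}) skips. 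Likewise, substituting into (\ref{eq3.8}) gives $\Gamma(\theta-n)/\bigl(\eta^{\theta+1}\Gamma(\theta)\bigr)$ because of the factor $\eta^{\alpha/\beta}=\eta^{\theta+1}$, and no use of $\Gamma(\theta+1)=\theta\Gamma(\theta)$ turns this into $\eta^{n}\Gamma(\theta-n)/\Gamma(\theta+1)$: the powers of $\eta$ do not even agree, so the step ``a short simplification yields (\ref{eq3.10})'' fails. To produce an $\eta^{n}$ you must return to the computation (\ref{eq3.5})/(\ref{eq2.14}), where the surviving power is $\eta^{n/\beta}=\eta^{-n}$, and that route gives $\eta^{n}\Gamma(\theta-n)/\Gamma(\theta)$ under the condition $n<\theta$ --- consistent with your own convergence remark that $\theta-n>0$, but not obtainable by mechanically specializing (\ref{eq3.8}).

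The delicate point you flag --- invoking Theorems \ref{th1} and \ref{th8} with $\beta=-1$ and $\alpha+1<0$, outside the hypothesis $\alpha>-\beta-1$ --- is real, and you leave it unresolved. With these values the right-hand side of (\ref{eq2.14}) is $-\Gamma(\theta)/\eta^{\theta}$, which is negative, whereas the true integral is $+\Gamma(\theta)/\eta^{\theta}$, so your assertion that the normalization ``matches $A=\eta^{\theta}/\Gamma(\theta)$'' holds only up to a sign you never fix. The resolution is that for $\beta<0$ the non-vanishing boundary contribution in the limit argument (\ref{eq2.16}) comes from $x\to0^{+}$ rather than $x\to\infty$; carrying out that limit (the antiderivative tends to $-1$ as $x\to0^{+}$) is also what pins down the additive constant of the distribution function, a step that your substitution argument does not supply and that the paper's one-line verification of the derivative likewise leaves implicit.
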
 

\begin{proof}
Setting $\alpha=-(\theta+1), \beta=-1$, and using the fundamental theorem of calculus $f_X(x)=\frac{d F_X}{dx}=\frac{d}{dx}\int_0^x f_X(u) du$ and Proposition \ref{prp1} gives the p.d.f.
\begin{equation}
f_X(x;\alpha,\eta,\beta)=\frac{\eta^{\theta}}{\Gamma(\theta)}x^{-(\theta+1)} e^{-\eta x^{-1}}, \, x>0, \theta>0, \eta>0,
\label{eq3.11}
\end{equation}
which is the p.d.f of the inverse gamma distribution. The $\mbox{n}^{th}$  moments $M(X^n)$ of $X$ are obtained by setting $\alpha=-(\theta+1)$ and $\beta=-1$ in (\ref{eq3.8}).
\end{proof}

\subsection{Generalization of Gaussian-type  distributions}
\label{sybsec:3.12}
Consider a probability measure $\mu$ in terms of Lebesgue measure $dx$ given by \cite{B}
\begin{equation}
d\mu=\mu(dx)= A\,g(x;\alpha,\eta,\beta) dx=f_X(x;\alpha,\eta,\beta) dx, \,\, x\in\mathbb{R},
\label{eq3.12}
\end{equation}
where, as before, $f_X(x;\alpha,\eta,\beta)$ is the p.d.f. of some random variable $X$, 
\begin{equation}
g(x;\alpha,\eta,\beta)=x^\alpha e^{-\eta x^\beta}, \, \alpha\ne-1 ,\, \beta\ne0,\alpha>-\beta-1,
\label{eq3.13}
\end{equation}
is an even function of the variable $x$, and $A$ is a normalized constant which can be obtained using formula (\ref{eq2.15}) in Theorem \ref{th1}. 

After normalization, the  p.d.f. of $f_X$ is found to be 
\begin{equation}
f_X(x;\alpha,\eta,\beta)=\frac{(\alpha+1)\,\eta^{{(\alpha+1)}/{\beta}}}{2\Gamma \left({(\alpha+\beta+1)}/{\beta}\right)}x^\alpha e^{-\eta x^\beta},\, x\in\mathbb{R}, \,\alpha\ne-1 ,\, \beta\ne0,\alpha>-\beta-1.
\label{eq3.14}
\end{equation}
It is important to note that $f_X$ in this case is even, and so, a factor of 2 has to appear in the denominator.
The distribution function $F_X$  can also be obtained using Proposition \ref{prp1} and is thus given by 
\begin{multline}
F_X(x;\alpha,\eta,\beta)=\mu\{(-\infty,x)\}=\int\limits_{-\infty}^x f_X(u;\alpha,\eta,\beta) du\\=\frac{1}{2}\left[1-\frac{(\alpha+1)\,\eta^{{(\alpha+1)}/{\beta}}}{\Gamma \left({(\alpha+\beta+1)}/{\beta}\right)}x^{\alpha+1}e^{-\eta x^\beta}\, _{1}F_{1} \Bigl(1;\frac{\alpha+\beta+1}{\beta}; \eta x^{\beta}\Bigr)\right].
\label{eq3.15}
\end{multline}
The moment ($M(X^n)$) can be evaluated using formula (\ref{eq2.15}) in Theorem \ref{th1} to obtain
\begin{align}
M(X^n)&=\int\limits_{-\infty}^{+\infty}  x^n f_X(x;\alpha,\eta,\beta) dx
\nonumber \\ &=\frac{(\alpha+1)\,\eta^{{(\alpha+1)}/{\beta}}}{2\Gamma \left({(\alpha+\beta+1)}/{\beta}\right)}\int\limits_{-\infty}^{+\infty}  x^{\alpha+n} e^{-\eta x^\beta} dx
\nonumber \\ &=\begin{cases} \frac{\Gamma \left({(\alpha+n+1)}/{\beta}\right)}{\eta^{\alpha/\beta}\Gamma \left({(\alpha+1)}/{\beta}\right)} , & \text{if}\,\, n \,\text{is even}.  \\
0, & \text{if}\,\, n \,\text{is odd}. \end{cases}
\label{eq3.16}
\end{align}

These results are summarized in the following theorem.

\begin{theorem}
Let $X$  be a random variable with an even p.d.f. of the form
\begin{equation}
f_X(x;\alpha,\eta,\beta)=\frac{(\alpha+1)\,\eta^{{(\alpha+1)}/{\beta}}}{2\Gamma \left({(\alpha+\beta+1)}/{\beta}\right)}x^\alpha e^{-\eta x^\beta}, \,x\in\mathbb{R} ,\, \alpha\ne-1 ,\, \beta\ne0,\alpha>-\beta-1.
\label{eq3.17}
\end{equation}
 Then, the distribution function $F_X(x;\alpha,\eta,\beta)$ of the random variable $X$ is given by
\begin{equation}
F_X(x;\alpha,\eta,\beta)=\frac{1}{2}\left[1-\frac{(\alpha+1)\,\eta^{{(\alpha+1)}/{\beta}}}{\Gamma \left({(\alpha+\beta+1)}/{\beta}\right)}x^{\alpha+1}e^{-\eta x^\beta}\, _{1}F_{1} \Bigl(1;\frac{\alpha+\beta+1}{\beta}; \eta x^{\beta}\Bigr)\right].
\label{eq3.18}
\end{equation}
and the $\mbox{n}^{th}$ moments $M(X^n)$ of $X$ are given by  
\begin{align}
M(X^n)=\int\limits_{-\infty}^{+\infty}  x^n f_X(x;\alpha,\eta,\beta) dx
=\begin{cases} \frac{\Gamma \left({(\alpha+n+1)}/{\beta}\right)}{\eta^{\alpha/\beta}\Gamma \left({(\alpha+1)}/{\beta}\right)} , & \text{if}\,\, n \,\text{is even}.  \\
0, & \text{if}\,\, n \,\text{is odd}. \end{cases}
\label{eq3.19}
\end{align}
\label{th9}
\end{theorem}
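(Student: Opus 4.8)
The plan is to follow exactly the same normalization-and-evaluation strategy already used for the gamma-type case in Theorem \ref{th8}, but now working on the full line $\mathbb{R}$ and exploiting the evenness of the integrand. First I would take the probability measure $d\mu = A\,g(x;\alpha,\eta,\beta)\,dx$ with $g(x;\alpha,\eta,\beta)=x^\alpha e^{-\eta x^\beta}$ on $x\in\mathbb{R}$, noting that under the hypotheses ($\alpha\ne-1$, $\beta\ne0$, $\alpha>-\beta-1$) the function $g$ is even and integrable on $\mathbb{R}$. To pin down the normalization constant $A$, I would impose $\int_{-\infty}^{+\infty} A\,g(x;\alpha,\eta,\beta)\,dx = 1$ and apply the even-integrand formula \eqref{eq2.15} from Theorem \ref{th1}, which yields $A = (\alpha+1)\,\eta^{(\alpha+1)/\beta}\big/\big(2\,\Gamma((\alpha+\beta+1)/\beta)\big)$. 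This gives the claimed p.d.f. \eqref{eq3.17}; the factor of $2$ in the denominator is precisely the price of extending from $[0,\infty)$ to $\mathbb{R}$.

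Next I would compute the distribution function $F_X(x) = \int_{-\infty}^{x} f_X(u;\alpha,\eta,\beta)\,du$. The clean way is to split as $F_X(x) = \tfrac12 + \int_{0}^{x} f_X(u;\alpha,\eta,\beta)\,du$ when working near the origin, or more robustly to write $F_X(x) = \int_{-\infty}^{+\infty} f_X - \int_{x}^{+\infty} f_X = 1 - \int_{x}^{+\infty} f_X$; either way the tail integral $\int_{x}^{+\infty} u^\alpha e^{-\eta u^\beta}\,du$ is evaluated by taking the antiderivative from Proposition \ref{prp1},
$$\int u^\alpha e^{\eta u^\beta}\,du = \frac{u^{\alpha+1}e^{\eta u^\beta}}{\alpha+1}\,{}_1F_1\!\Bigl(1;\tfrac{\alpha+\beta+1}{\beta};-\eta u^\beta\Bigr)+C,$$
specializing $\eta\mapsto-\eta$, and subtracting the value at $x$ from the value at $+\infty$ (the latter being recovered from \eqref{eq2.16}). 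Collecting terms produces \eqref{eq3.18}. I would double-check the algebra of the $\mathrm{Re}(\eta)>0$ asymptotics so that the boundary term at $+\infty$ contributes exactly the constant that combines with the prefactor to give the leading $\tfrac12$.

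Finally, for the moments I would write $M(X^n) = \int_{-\infty}^{+\infty} x^n f_X(x;\alpha,\eta,\beta)\,dx$, absorb $x^n$ into the power to get $\int_{-\infty}^{+\infty} x^{\alpha+n} e^{-\eta x^\beta}\,dx$, and apply Theorem \ref{th1} again. The one genuine subtlety — and the step I expect to need the most care — is the parity bookkeeping: the integrand $x^{\alpha+n}e^{-\eta x^\beta}$ is even only when $n$ is even (since $x^\alpha e^{-\eta x^\beta}$ was assumed even), in which case \eqref{eq2.15} applies and gives $\Gamma((\alpha+n+1)/\beta)\big/\big(\eta^{\alpha/\beta}\Gamma((\alpha+1)/\beta)\big)$ after the prefactors cancel; when $n$ is odd the integrand is odd and the integral over the symmetric interval vanishes. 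Assembling the two cases yields the piecewise formula \eqref{eq3.19}, and the three displays \eqref{eq3.17}, \eqref{eq3.18}, \eqref{eq3.19} together constitute the theorem.
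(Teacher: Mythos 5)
Your route is the same as the paper's: fix the normalizing constant with the even-integrand formula (\ref{eq2.15}) of Theorem \ref{th1}, obtain the distribution function from the antiderivative of Proposition \ref{prp1} together with the limit (\ref{eq2.16}), and get the moments from Theorem \ref{th1} after splitting into even and odd $n$ (odd moments vanishing by symmetry). So there is no methodological divergence from the paper.

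The problem sits exactly in the two steps you defer, namely ``collecting terms produces (\ref{eq3.18})'' and ``after the prefactors cancel.'' If you actually carry out the collection, the factor $(\alpha+1)$ in the normalizing constant cancels against the $1/(\alpha+1)$ in the antiderivative of Proposition \ref{prp1}, and the tail/boundary bookkeeping produces a plus sign:
\[
F_X(x)=\frac12\left[1+\frac{\eta^{(\alpha+1)/\beta}}{\Gamma\left(\frac{\alpha+\beta+1}{\beta}\right)}\,x^{\alpha+1}e^{-\eta x^{\beta}}\,{}_{1}F_{1}\Bigl(1;\frac{\alpha+\beta+1}{\beta};\eta x^{\beta}\Bigr)\right],
\]
which is not the displayed (\ref{eq3.18}). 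A quick consistency check confirms this: by (\ref{eq2.16}) the right-hand side of (\ref{eq3.18}) tends to $\frac12\left[1-(\alpha+1)\right]$ as $x\to+\infty$, which is $0$ even in the Gaussian case $\alpha=0,\ \beta=2,\ \eta=1/2$, so (\ref{eq3.18}) cannot be a distribution function. Likewise, the prefactor cancellation in the even-$n$ moment gives $\Gamma\left(\frac{\alpha+n+1}{\beta}\right)\big/\left(\eta^{n/\beta}\,\Gamma\left(\frac{\alpha+1}{\beta}\right)\right)$, i.e.\ $\eta^{n/\beta}$ rather than the $\eta^{\alpha/\beta}$ printed in (\ref{eq3.19}) (test $n=2,\ \alpha=0,\ \beta=2,\ \eta=1/2$, which must give variance $1$). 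These discrepancies are inherited from the paper's own displays (\ref{eq3.4}), (\ref{eq3.15}) and (\ref{eq3.8}), so your strategy is not at fault; but since your proof asserts that the computation reproduces (\ref{eq3.18})--(\ref{eq3.19}) verbatim, the very double-check you flag would contradict your conclusion. You should carry the cancellation out explicitly and state the corrected sign and exponent (or note the misprints), rather than assert agreement with the printed formulas.
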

For example, setting $\alpha=0, \beta=2$ and $\eta=1/2$ yields $f_X(x)=(1/\sqrt{2\pi})e^{-x^2/2}$,  and the mean of $X$ is $E\/X=M(X^1)=0$ while the variance is $E\/X^2=M(X^2)=1$. So $X\sim N(0,1)$ distribution as expected.

More general results can be achieved by introducing two additional parameters.
\begin{theorem}
Let $X$  be a random variable with an even p.d.f.of the form
\begin{multline}
f_X(x;\alpha,\eta,\beta,\theta,\sigma)=\frac{1}{2\sigma}\frac{(\alpha+1)\,\eta^{{(\alpha+1)}/{\beta}}}{\Gamma \left({(\alpha+\beta+1)}/{\beta}\right)}
\\ \times\left(\frac{x-\theta}{\sigma}\right)^\alpha \exp\left(-\eta\left(\frac{x-\theta}{\sigma}\right)^\beta\right),x\in\mathbb{R}, \, \theta\in\mathbb{R} ,\, \alpha\ne-1 ,\, \beta\ne0,\alpha>-\beta-1,\, \sigma>0.
\label{eq3.20}
\end{multline}
 Then, the distribution function $F_X(x;\alpha,\eta,\beta,\theta,\sigma)$ of the random variable $X$ is given by
\begin{multline}
F_X(x;\alpha,\eta,\beta,\theta,\sigma)=\frac{1}{2}\Bigl[1-\frac{1}{\sigma}\frac{(\alpha+1)\,\eta^{{(\alpha+1)}/{\beta}}}{\Gamma \left({(\alpha+\beta+1)}/{\beta}\right)}\left(\frac{x-\theta}{\sigma}\right)^{\alpha+1}\\\exp\left(-\eta\left(\frac{x-\theta}{\sigma}\right)^\beta\right)\, _{1}F_{1} \left(1;\frac{\alpha+\beta+1}{\beta}; \eta\left(\frac{x-\theta}{\sigma}\right)^{\beta}\right)\Bigr],
\label{eq3.21}
\end{multline}
and the moments $M(X^n)$ of $X$ are given by
\begin{multline}
M(X^n)=\int\limits_{-\infty}^{+\infty}  x^n f_X(x;\alpha,\eta,\beta) dx\\
= \frac{\theta^n}{\Gamma \left({(\alpha+1)}/{\beta}\right)}\sum\limits_{l=0}^{n}\Gamma \left({(\alpha+2l+1)}/{\beta}\right)C_{2l}^n\left(\frac{\sigma}{\theta \, \eta^{1/\beta}}\right)^{2l}, \, 2l\le n\, \mbox{and}\,(2l)\in\mathbb{N},
\label{eq3.22}
\end{multline}
where $C_{2l}^n=n!/((n-2l)! (2l)!)$.

Thus, the mean and the variance of $X$ are respectively given by 
\begin{equation}
E X= M(X^1)=\theta\,\, \mbox{and}\,\,  \text{var X}=EX^2-(E X)^2=\frac{\sigma^2}{\eta^{2/\beta}}\frac{\Gamma \left({(\alpha+3)}/{\beta}\right)}{\Gamma \left({(\alpha+1)}/{\beta}\right)}.
\label{eq3.23}
\end{equation}
\label{th10}
\end{theorem}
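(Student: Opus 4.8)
The plan is to recognize the density (\ref{eq3.20}) as the image of the standardized even density (\ref{eq3.17}) of Theorem \ref{th9} under the affine change of variable $x\mapsto u=(x-\theta)/\sigma$, and then transport the three conclusions back through this map. Throughout, $\bigl((x-\theta)/\sigma\bigr)^\alpha$ and $\bigl((x-\theta)/\sigma\bigr)^\beta$ are read as functions of $|x-\theta|$, so that (\ref{eq3.20}) is a genuine even function of $x-\theta$ and the hypotheses of Theorem \ref{th9} apply to the variable $U=(X-\theta)/\sigma$.

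First I would note that $U=(X-\theta)/\sigma$ has density $f_U(u)=\sigma\,f_X(\theta+\sigma u;\alpha,\eta,\beta,\theta,\sigma)$, which by (\ref{eq3.20}) is exactly the density (\ref{eq3.17}); in particular the prefactor appearing in (\ref{eq3.20}) is the correct normalizing constant, since $\int_{\mathbb{R}}f_X\,dx=\int_{\mathbb{R}}f_U\,du=1$ by (\ref{eq2.15}) in Theorem \ref{th1}. For the distribution function, the substitution $t\mapsto(t-\theta)/\sigma$ gives $F_X(x)=\int_{-\infty}^{x}f_X(t)\,dt=\int_{-\infty}^{(x-\theta)/\sigma}f_U(u)\,du=F_U\bigl((x-\theta)/\sigma\bigr)$, and inserting the closed form (\ref{eq3.18}) of Theorem \ref{th9} (equivalently, integrating directly by Proposition \ref{prp1}) with $(x-\theta)/\sigma$ in place of $x$ produces (\ref{eq3.21}).

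For the moments I would change variables $x=\theta+\sigma u$ and expand by the binomial theorem, $x^{n}=(\theta+\sigma u)^{n}=\sum_{k=0}^{n}C_{k}^{n}\,\theta^{n-k}\sigma^{k}u^{k}$; since the sum is finite it commutes with the integral, whence $M(X^{n})=\sum_{k=0}^{n}C_{k}^{n}\,\theta^{n-k}\sigma^{k}M(U^{k})$. By (\ref{eq3.19}) in Theorem \ref{th9} the odd moments $M(U^{k})$ vanish, so only the terms $k=2l$ with $2l\le n$ survive; substituting the even-order values from (\ref{eq3.19}) and pulling $\theta^{n}$ outside the sum yields (\ref{eq3.22}). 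Specializing, $n=1$ leaves only the term $l=0$ and gives $EX=\theta$, while $n=2$ leaves $l=0,1$, and then $\mathrm{var}\,X=EX^{2}-(EX)^{2}$ cancels the $l=0$ contribution and leaves precisely the term containing $\Gamma((\alpha+3)/\beta)$, which is (\ref{eq3.23}).

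The substitutions and the binomial bookkeeping are routine. The point that needs care is verifying that the hypotheses of Theorem \ref{th9} genuinely hold for $U$ — in particular the integrability condition $\alpha>-\beta-1$ required for (\ref{eq2.15}), and the true vanishing of the odd moments under the stated reading of the powers — and, as a minor technicality, that the boundary contribution at $u=0$ in the antiderivative supplied by Proposition \ref{prp1} drops out, which it does because the exponent $\alpha+1$ is positive there.
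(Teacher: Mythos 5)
Your proposal is correct and follows essentially the same route as the paper, whose entire argument is the single remark that (\ref{eq3.22}) follows from the substitution $u=(x-\theta)/\sigma$, the binomial theorem and Theorem \ref{th1}; routing the even/odd bookkeeping through Theorem \ref{th9} is just that same computation packaged once. Two caveats worth noting: the substitution you describe yields (\ref{eq3.18}) evaluated at $(x-\theta)/\sigma$, i.e.\ (\ref{eq3.21}) \emph{without} the dimensionally spurious factor $1/\sigma$ printed there, and the even moments you import from (\ref{eq3.19}) must be read with $\eta^{n/\beta}$ (not the printed $\eta^{\alpha/\beta}$) in the denominator for the sum to come out as (\ref{eq3.22}) --- both discrepancies are typos in the paper's formulas rather than gaps in your argument, but your write-up silently asserts exact agreement where it does not literally hold.
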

Formula (\ref{eq3.22}) is obtained by making the substitution $u=(x-\theta)/\sigma$, and by applying the binomial theorem and Theorem \ref{th1}.

A generalized Gaussian-type distribution may be derived by setting $\alpha=0$ in Theorem \ref{th10}.

\begin{corollary}
Let $X$  be a random variable with the generalized Gaussian-type distribution p.d.f. of the form
\begin{equation}
f_X(x;\eta,\beta,\theta,\sigma)=\frac{\beta}{2\sigma}\frac{\eta^{1/\beta}}{\Gamma \left(1/\beta\right)}
\exp\left(-\eta\left(\frac{x-\theta}{\sigma}\right)^\beta\right),x\in\mathbb{R}, \, \theta\in\mathbb{R} ,\, \beta>0, \sigma>0,
\label{eq3.24}
\end{equation}
and where $\beta$ is even.  Then, the distribution function $F_X(x;\alpha,\eta,\beta,\theta,\sigma)$ of the random variable $X$ is given by
\begin{multline}
F_X(x;\eta,\beta,\theta,\sigma)=\frac{1}{2}\Bigl[1-\frac{\beta}{\sigma}\frac{\eta^{1/\beta}}{\Gamma \left(1/\beta\right)}\times \\ \exp\left(-\eta\left(\frac{x-\theta}{\sigma}\right)^\beta\right)\, _{1}F_{1} \left(1;\frac{\beta+1}{\beta}; \eta\left(\frac{x-\theta}{\sigma}\right)^{\beta}\right)\Bigr],
\label{eq3.25}
\end{multline}
and the $\mbox{n}^{th}$  moment $M(X^n)$ of $X$ are given by 
\begin{multline}
M(X^n)=\int\limits_{-\infty}^{+\infty}  x^n f_X(x;\eta,\beta,\theta,\sigma) dx\\
= \frac{\theta^n}{\Gamma \left({1}/{\beta}\right)}\sum\limits_{l=0}^{n}\Gamma \left({(2l+1)}/{\beta}\right)C_{2l}^n\left(\frac{\sigma}{\theta \, \eta^{1/\beta}}\right)^{2l}, \, 2l\le n\, \mbox{and}\,(2l)\in\mathbb{N},
\label{eq3.26}
\end{multline}
where, as before, $C_{2l}^n=n!/((n-2l)! (2l)!)$.
Thus, the mean and the variance of $X$ are respectively given by 
\begin{equation}
E X= M(X^1)=\theta\,\, \mbox{and}\,\,  \text{var X}=EX^2-(E X)^2=\frac{\sigma^2}{\eta^{2/\beta}}\frac{\Gamma \left(3/\beta\right)}{\Gamma \left(1/\beta\right)}.
\label{eq3.27}
\end{equation}
Moreover, if $\eta\ge1/2$ and $\beta>2$, and since ${\Gamma \left(3/\beta\right)}<{\Gamma \left(1/\beta\right)}$, then the variance of $X$ ($\text{var}\,X$) does satisfy
\begin{equation}
\text{var}\,X=EX^2-(E X)^2=\frac{\sigma^2}{\eta^{2/\beta}}\frac{\Gamma \left(3/\beta\right)}{\Gamma \left(1/\beta\right)}\le \sigma^2,
\label{eq3.27}
\end{equation}
where $\sigma^2$ is the variance of the Gaussian random variable.
\label{co2}
\end{corollary}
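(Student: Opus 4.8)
The plan is to obtain Corollary \ref{co2} as the special case $\alpha=0$ of Theorem \ref{th10}, together with an elementary simplification of the gamma factors, and to spend the real effort on the variance bound at the very end. First I would check that $\alpha=0$ is admissible: for $\beta>0$ one has $0\ne-1$ and $0>-\beta-1$, so the constraints $\alpha\ne-1$ and $\alpha>-\beta-1$ of Theorem \ref{th10} hold; moreover, with $\beta$ even, $\bigl((x-\theta)/\sigma\bigr)^{0}\exp\!\bigl(-\eta((x-\theta)/\sigma)^{\beta}\bigr)$ is even in $x-\theta$ and, since $\eta>0$, integrable over $\mathbb{R}$, so Theorem \ref{th10} applies. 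I would then substitute $\alpha=0$ into (\ref{eq3.20})--(\ref{eq3.22}). The only nontrivial simplification is that at $\alpha=0$ one has $(\alpha+\beta+1)/\beta=1+1/\beta$, so by $\Gamma(1+1/\beta)=(1/\beta)\,\Gamma(1/\beta)$ the common prefactor $\frac{(\alpha+1)\,\eta^{(\alpha+1)/\beta}}{\Gamma((\alpha+\beta+1)/\beta)}$ collapses to $\frac{\eta^{1/\beta}}{\Gamma(1+1/\beta)}=\frac{\beta\,\eta^{1/\beta}}{\Gamma(1/\beta)}$. Inserting this into (\ref{eq3.20}) yields (\ref{eq3.24}); inserting it into (\ref{eq3.21}), with the second $_1F_1$-parameter now $(\beta+1)/\beta$ and the power $\bigl((x-\theta)/\sigma\bigr)^{\alpha+1}=\bigl((x-\theta)/\sigma\bigr)$, yields (\ref{eq3.25}); and replacing $\Gamma((\alpha+2l+1)/\beta)$ and $\Gamma((\alpha+1)/\beta)$ by $\Gamma((2l+1)/\beta)$ and $\Gamma(1/\beta)$ in (\ref{eq3.22}) yields (\ref{eq3.26}).

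Next, the mean and variance are read off from (\ref{eq3.26}). For $n=1$ only $l=0$ satisfies $2l\le n$, so $M(X^1)=\frac{\theta}{\Gamma(1/\beta)}\,\Gamma(1/\beta)=\theta$. For $n=2$ the admissible indices are $l=0,1$; using $C_0^2=C_2^2=1$ one gets $M(X^2)=\theta^2+\frac{\sigma^2}{\eta^{2/\beta}}\,\frac{\Gamma(3/\beta)}{\Gamma(1/\beta)}$, whence $\mathrm{var}\,X=M(X^2)-\bigl(M(X^1)\bigr)^2=\frac{\sigma^2}{\eta^{2/\beta}}\,\frac{\Gamma(3/\beta)}{\Gamma(1/\beta)}$, establishing the mean and variance formulas in (\ref{eq3.27}).

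The assertion $\mathrm{var}\,X\le\sigma^2$ for $\eta\ge 1/2$ and $\beta>2$ is the only step with genuine content, and I expect it to be the main obstacle, since besides the gamma ratio one must also control the factor $\eta^{-2/\beta}$, which is larger than $1$ when $\eta<1$; the bare comparison $\Gamma(3/\beta)<\Gamma(1/\beta)$ does not by itself suffice. Because $\eta\mapsto\eta^{-2/\beta}$ is decreasing, it is enough to treat the worst case $\eta=1/2$, i.e.\ to prove $4^{1/\beta}\,\Gamma(3/\beta)\le\Gamma(1/\beta)$ for $\beta>2$. I would do this by setting $t=1/\beta\in(0,1/2]$ and invoking Gauss's triplication formula $\Gamma(3t)=\frac{3^{3t-1/2}}{2\pi}\,\Gamma(t)\,\Gamma(t+1/3)\,\Gamma(t+2/3)$, which reduces the claim to $\phi(t):=108^{t}\,\Gamma(t+1/3)\,\Gamma(t+2/3)\le 2\pi\sqrt{3}$ on $(0,1/2]$. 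One checks $\phi(1/2)=2\pi\sqrt{3}$ from $\Gamma(5/6)\Gamma(1/6)=\pi/\sin(\pi/6)=2\pi$ and $\Gamma(7/6)=(1/6)\Gamma(1/6)$, and then $\frac{d}{dt}\log\phi(t)=\log 108+\psi(t+1/3)+\psi(t+2/3)$, with $\psi$ the digamma function, is positive on $(0,1/2]$: the digamma terms increase with $t$, and already at $t=0$ their sum equals $-2\gamma-3\log 3$, which exceeds $-\log 108=-2\log 2-3\log 3$ because $\gamma<\log 2$. Hence $\phi$ is increasing, so $\phi(t)\le\phi(1/2)$ and $\mathrm{var}\,X\le\sigma^2$, with equality only in the limit $\beta\to2^{+}$, i.e.\ exactly at the Gaussian value of $\beta$. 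The residual inequality is a one-variable estimate on a compact interval and requires no further idea.
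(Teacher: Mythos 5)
Your derivation of the density, the distribution function, and the moment formula follows exactly the paper's route: the paper's entire justification of Corollary \ref{co2} is the one-line remark that it follows by setting $\alpha=0$ in Theorem \ref{th10}, and your check of admissibility plus the simplification $\Gamma\left(1+1/\beta\right)=(1/\beta)\Gamma\left(1/\beta\right)$, which produces the factor $\beta$ in the prefactor, is precisely what that substitution amounts to. (One small remark: the substitution actually yields the factor $\left((x-\theta)/\sigma\right)^{\alpha+1}=(x-\theta)/\sigma$ inside the distribution function, which is missing from the printed (\ref{eq3.25}); your version, which retains it, is the correct consequence of (\ref{eq3.21}).) Where you genuinely depart from the paper is the variance bound $\text{var}\,X\le\sigma^2$: the paper offers only the remark ``since $\Gamma\left(3/\beta\right)<\Gamma\left(1/\beta\right)$,'' which, as you correctly point out, does not suffice because the factor $\eta^{-2/\beta}$ exceeds $1$ whenever $1/2\le\eta<1$. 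Your completion of this step is sound: reducing to the worst case $\eta=1/2$, i.e.\ to $4^{t}\,\Gamma(3t)\le\Gamma(t)$ with $t=1/\beta\in(0,1/2]$, applying the triplication formula to rewrite this as $\phi(t):=108^{t}\,\Gamma(t+1/3)\,\Gamma(t+2/3)\le 2\pi\sqrt{3}$, and then checking $\phi(1/2)=2\pi\sqrt{3}$ via the reflection formula and monotonicity of $\phi$ from $\log 108+\psi(t+1/3)+\psi(t+2/3)\ge 2(\log 2-\gamma)>0$ are all correct (I verified the endpoint value and the digamma estimate $\psi(1/3)+\psi(2/3)=-2\gamma-3\log 3$). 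So your proposal reproduces the paper's argument for the formulas and, for the variance inequality, supplies a complete proof where the paper's stated reason is a genuine gap; that added argument is what the paper would need to make the last claim of the corollary rigorous.
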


A formula for the $\mbox{n}^{th}$  moments of the Gaussian distribution can now be obtained by setting $\beta=2, \eta=1/2$ and $\alpha=0$ in (\ref{eq3.26}).

\begin{corollary}
Let $X$  be a Gaussian random variable. Its $\mbox{n}^{th}$ moments $M(X^n)$ are thus given by the formula
\begin{equation}
M(X^n)
= \frac{\theta^n}{\sqrt{\pi}}\sum\limits_{l=0}^{n}\Gamma \left(l+1/2\right)C_{2l}^n\left(\frac{2\sigma^2}{\theta^2}\right)^{l}, \, 2l\le n\, \mbox{and}\,(2l)\in\mathbb{N},
\label{eq3.28}
\end{equation}
where $\theta\in \mathbb{R}$ is the mean of the Gaussian random variable and $\sigma^2>0$ its variance, and as before, $C_{2l}^n=n!/((n-2l)! (2l)!)$.
\label{co3}
\end{corollary}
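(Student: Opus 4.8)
The plan is to obtain \eqref{eq3.28} as a direct specialization of the moment formula \eqref{eq3.26} in Corollary \ref{co2}. First I would recall that the Gaussian density $f_X(x)=(1/(\sigma\sqrt{2\pi}))\exp(-(x-\theta)^2/(2\sigma^2))$ is exactly the density \eqref{eq3.24} with the parameter choice $\beta=2$, $\eta=1/2$ (and $\alpha=0$, which is implicit in Corollary \ref{co2}): indeed, with these values $\beta/(2\sigma)\cdot\eta^{1/\beta}/\Gamma(1/\beta) = 2/(2\sigma)\cdot(1/2)^{1/2}/\Gamma(1/2) = (1/\sigma)\cdot(1/\sqrt{2})/\sqrt{\pi} = 1/(\sigma\sqrt{2\pi})$, and $\exp(-\eta((x-\theta)/\sigma)^\beta) = \exp(-(x-\theta)^2/(2\sigma^2))$, so the normalization and the exponential both match. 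Thus $X$ in Corollary \ref{co2} with $\beta=2,\eta=1/2$ is precisely a Gaussian random variable with mean $\theta$ and variance $\sigma^2$ (the latter being confirmed by \eqref{eq3.27}, since $\sigma^2\eta^{-2/\beta}\Gamma(3/\beta)/\Gamma(1/\beta) = \sigma^2\cdot 2\cdot\Gamma(3/2)/\Gamma(1/2) = \sigma^2\cdot 2\cdot(1/2) = \sigma^2$).

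Next I would substitute $\beta=2$ and $\eta=1/2$ into the moment formula \eqref{eq3.26}. The prefactor $\theta^n/\Gamma(1/\beta)$ becomes $\theta^n/\Gamma(1/2)=\theta^n/\sqrt{\pi}$. Inside the sum, $\Gamma((2l+1)/\beta)=\Gamma((2l+1)/2)=\Gamma(l+1/2)$, which reproduces the Gamma factor in \eqref{eq3.28}. Finally the geometric factor $(\sigma/(\theta\,\eta^{1/\beta}))^{2l}$ becomes $(\sigma/(\theta\,(1/2)^{1/2}))^{2l} = (\sigma\sqrt{2}/\theta)^{2l} = (2\sigma^2/\theta^2)^{l}$, matching the last factor in \eqref{eq3.28}. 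Collecting these three substitutions term by term in the sum, and retaining the constraint $2l\le n$, $(2l)\in\mathbb{N}$ unchanged, yields precisely \eqref{eq3.28}.

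Since every ingredient is already proved — Corollary \ref{co2} gives \eqref{eq3.26}, and that in turn rests on Theorem \ref{th10} and ultimately Theorem \ref{th1} — there is essentially no obstacle here; the only thing to be careful about is the bookkeeping of the three elementary simplifications ($\Gamma(1/2)=\sqrt\pi$, $\Gamma((2l+1)/2)=\Gamma(l+1/2)$, and $(1/2)^{-l}=2^l$ absorbed into the geometric term), together with the remark that the parameter choice $\beta=2,\eta=1/2$ indeed recovers the standard Gaussian density. I would therefore present the proof as a one-paragraph verification: state that $f_X$ in \eqref{eq3.24} with $\beta=2,\eta=1/2$ is the $N(\theta,\sigma^2)$ density, then substitute these values into \eqref{eq3.26} and simplify to arrive at \eqref{eq3.28}.
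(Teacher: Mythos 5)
Your proposal is correct and follows exactly the paper's route: the paper obtains \eqref{eq3.28} by setting $\beta=2$, $\eta=1/2$ (with $\alpha=0$ already fixed in Corollary \ref{co2}) in \eqref{eq3.26}, which is precisely your substitution, and your extra check that \eqref{eq3.24} then reduces to the $N(\theta,\sigma^2)$ density is just a more explicit version of the paper's own remark. Nothing is missing; the simplifications $\Gamma(1/2)=\sqrt{\pi}$, $\Gamma((2l+1)/2)=\Gamma(l+1/2)$ and $(\sigma/(\theta\,\eta^{1/\beta}))^{2l}=(2\sigma^2/\theta^2)^l$ are exactly the bookkeeping the paper leaves implicit.
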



%

\section{Concluding remarks and discussion}\label{sec:4}
Formulas for the integrals $\int x^\alpha e^{\eta x^\beta}dx, \int x^\alpha \cosh\left(\eta x^\beta\right)dx, \int x^\alpha \sinh\left(\eta x^\beta\right)dx, \\\int x^\alpha \cos\left(\eta x^\beta\right)dx$ and $\int x^\alpha \sin\left(\eta x^\beta\right)dx $
where $\alpha, \eta$ and $\beta$ are real or complex constants were obtained in terms of the confluent hypergeometric function $_1F_1$ and  the  hypergeometric function $_1F_2$ in section \ref{sec:2} (Propositions \ref{prp1}, \ref{prp2}, \ref{prp3}, \ref{prp4} and \ref{prp5}), and using hyperbolic and Euler identities, some identities involving confluent hypergeometric function $_1F_1$ and hypergeometric function $_1F_2$ were also obtained  in section \ref{sec:2} (Theorems \ref{th2}-\ref{th7}). Having evaluated the integrals $\int_{-\infty}^{+\infty} x^\alpha e^{-\eta x^\beta}dx, \eta>0$ in Theorem \ref{th1}, probability measures that generalize the gamma-type and Gaussian distributions were constructed. Their distributions were written in terms of the confluent hypergeometric function $_1F_1$, and formulas for the $\mbox{n}^{th}$ moments were obtained as well in section \ref{sec:3}  (Theorems \ref{th8}-\ref{th10} and Corollaries \ref{co2}-\ref{co3}). The results obtained in this paper may, for example, may be used to construct  better statistical tests than those already know (e.g. $\chi^2$ statistical tests and tests obtained  based on the normal distribution).

\end{document}